\definecolor{brightcerulean}{rgb}{0.11, 0.67, 0.84}
\definecolor{bondiblue}{rgb}{0.0, 0.58, 0.71}
\definecolor{darkspringgreen}{rgb}{0.09, 0.45, 0.27}
\definecolor{darkred}{rgb}{0.55, 0.0, 0.0}
\colorlet{mdtRed}{red!50!black}
\newtheorem{theorem}{Theorem}[section]
\newtheorem{lemma}[theorem]{Lemma}
\newtheorem{proposition}[theorem]{Proposition}
\newtheorem{corollary}[theorem]{Corollary}
\theoremstyle{remark}
\newtheorem{remark}[theorem]{Remark}
\newcommand{\E}{\mathbb{E}}
\newcommand{\Var}{\mathrm{Var}}
\newcommand{\Ent}{\mathrm{Ent}}
\newcommand{\tr}{\mathrm{tr}}
\newcommand{\supp}{\mathrm{supp}}
\newcommand{\R}{\mathbb{R}}
\newcommand{\1}{\mathbf 1}
\newcommand\tup[1]{\left\langle #1 \right\rangle}
\begin{document}

\title{Quantum Talagrand-type Inequalities via Variance Decay}
\author{Fan Chang\thanks{School of Statistics and Data Science, Nankai University, Tianjin, China; and Extremal Combinatorics and Probability Group, Institute for Basic Science, Daejeon, South Korea. \textit{E-mail: \href{mailto:1120230060@mail.nankai.edu.cn}{1120230060@mail.nankai.edu.cn}}},\quad Peijie Li\thanks{Department of Mathematics, University of Hong Kong, Hong Kong, China. \textit{E-mail: \href{mailto:lipeijie98@connect.hku.hk}{lipeijie98@connect.hku.hk}}}}
\date{}

\maketitle
\begin{abstract}
We establish dimension-free quantum Talagrand-type inequalities with explicit constants on the quantum Boolean cube, via a unified variance-decay perspective. For individual observables, short-time variance decay along the depolarizing semigroup, with rates estimated through hypercontractivity, naturally yields Talagrand-type bounds. Within this framework, we derive Talagrand-type energy--variance and high-order influence--variance inequalities. From the former, we obtain quantum analogues of Talagrand's isoperimetric inequality, the Eldan--Gross inequality, and the Cordero-Erausquin--Eskenazis inequality; from the latter, we derive high-order quantum Talagrand--KKL-type and partial isoperimetric bounds. Altogether, our work provides a variance-decay framework of broad applicability, unifying first-order and high-order Talagrand-type phenomena.

\vspace{10pt}

\noindent\textbf{Keywords:} Quantum Boolean cube, Talagrand-type inequalities, Variance decay.

\vspace{10pt}

\noindent\textbf{Mathematics Subject Classification:} 06E30; 46L53; 47D07 
\end{abstract}

\section{Introduction}

Talagrand-type inequalities form a central lineage in the analysis of Boolean functions. Initiated by Talagrand's seminal works~\cite{Talagrand1993Iso,Talagrand1994OnRA,Talagrand1997OnBA}, these inequalities provide sharp dimension-free bounds relating variance to functionals of energy and influence, often with logarithmic amplification. They strengthen classical results such as the Poincar\'{e} inequality and the Kahn--Kalai--Linial (KKL) theorem~\cite{KKL1988}, and have profound implications for isoperimetry, sharp threshold phenomena, and concentration of measure in high-dimensional product spaces.

Broadly speaking, Talagrand-type inequalities may be divided into two families. \emph{Energy--variance inequalities} (e.g., Talagrand's isoperimetric inequality, the Eldan--Gross inequality~\cite{Eldan2022Concentration}, and the Cordero-Erausquin--Eskenazis inequality~\cite{CE2023}) relate energies (moments of gradients) to variance. \emph{Influence--variance inequalities} (e.g., the Talagrand--KKL inequality~\cite{Talagrand1994OnRA} and its $L^p$~\cite{CL2012hypercontr} and high-order~\cite{Tanguy2020,P2025} variants) quantify how local influences (moments of partial derivatives) govern global variance. Together, these two strands form a unifying framework for concentration and boundary phenomena.

The extension of these principles to noncommutative settings has recently attracted considerable attention. Montanaro and Osborne~\cite{MO2010A} initiated the systematic study of analysis on the quantum Boolean cube, and subsequent works have established quantum analogues of classical Talagrand-type inequalities. On the influence--variance side, Rouz\'{e}, Wirth and Zhang~\cite{Rouze2024quantum} proved a sub-$L^2$ quantum counterpart of Talagrand--KKL-type inequality via the semigroup approach from~\cite{CL2012hypercontr}, which yield quantum KKL and Friedgut type bounds; Blecher, Gao and Xu~\cite{Blecher2024geometricinfluencesquantumboolean} sharpened these inequalities via a quantum random-restriction method and further developed quantum analogues of the high-order influence result of Przyby\l{}owski~\cite{P2025}. On the energy--variance side, Jiao, Lin, Luo and Zhou~\cite{jiao2024quantum,JLZ2025} systematically studied quantum generalizations of classical results including Talagrand's isoperimetric inequality, the Eldan--Gross inequality, and the Cordero-Erausquin--Eskenazis inequality. These advances demonstrate that Talagrand-type inequalities retain their central role in the quantum setting, motivating the search for dimension-free formulations and high-order extensions.

In this work, we establish Talagrand-type inequalities on the quantum Boolean cube. To state our results, we first recall some basic notations (detailed definitions are given in Section~\ref{sec:preliminaries}). For observables in $M_2(\mathbb{C})^{\otimes n}$, the normalized trace $\tau$ and normalized Schatten-$p$ norms $\left\|\,\cdot\,\right\|_p$ replace the expectation and $L^p$ norms. The depolarizing semigroup $(P_t)_{t\ge0}$ plays the role of the heat flow, with generator $\mathcal L=\sum_{j=1}^n d_j$ where $d_j$ is the discrete derivative in the $j$-th coordinate. Moreover, let $|\nabla \cdot |^2=\sum_{j=1}^n |d_j \cdot|^2$ be the discrete gradient, and for a nonempty subset $J\subseteq[n]$, let $d_J=\prod_{j\in J}d_j$ be the high-order derivative over the subset of coordinates. Our main results (with explicit constants given in Section~\ref{sec:proofs-main}) include:
\begin{itemize}
    \item An \emph{energy--variance inequality} (Theorem \ref{thm:energy-variance}) that relates the $p$-energy $\Ecal_p[A]=\left\||\nabla A|\right\|_p^p$ to the variance $\Var(A)=\tau(|A-\tau(A)|^2)$: for all $A\in M_2(\mathbb{C})^{\otimes n}$ and $1\leq q\leq p\leq 2$,
    \begin{equation*}
    \|A\|_\infty^{2-p}\,\Ecal_p[A] \;\gtrsim\; \Var(A)\,\max\bigl\{1,\;\Rscr(A,q)^{p/2}\bigr\},
    \end{equation*}
    where the amplification functional $\Rscr(A,q)$ compares the variance against the $L^q$-deviation and the aggregated $q$-influences. This further leads to a quantum Talagrand-type isoperimetric inequality (Corollary~\ref{cor:quantum isoper}), a quantum Eldan--Gross inequality (Corollary~\ref{cor:quantum Eldan-Gross}), and a quantum Cordero-Erausquin--Eskenazis-type energy inequality (Corollary~\ref{cor:quantum-p-q-energy}).
    \item A \emph{high-order influence--variance inequality} (Theorem \ref{thm:influence-var}) that relates the high-order $p$-influence $\Inf^p_J[A]= \left\|d_J A\right\|_p^p$ to the partial variance functional $V_J(A)=\int_0^\infty2\left\|d_J P_t A\right\|_2^2 \dif t$: for all $A\in M_2(\mathbb{C})^{\otimes n}$, $J\subseteq [n]$ with $|J|=k\ge 1$, $p\in[1,2]$, and $q\in[1,2)$,
    \begin{equation*}
        \|A\|_\infty^{2-p}\,\Inf_J^p[A] \;\gtrsim_k\; V_J(A)\,\max\left\{k,\,
    \frac{q}{2(2-q)}\ln^{+}\!\left(\frac{k\,V_J(A)}{\Inf_J^q[A]^{2/q}}\right)\right\}.
    \end{equation*}
    While the high-order quantum Talagrand-type bounds of Blecher, Gao and Xu~\cite{Blecher2024geometricinfluencesquantumboolean} relate Fourier tail quantities to aggregated influences globally, our inequality provides a local subset-wise perspective on these principles. From this result, we further obtain quantum Talagrand--KKL-type high-order influence inequalities (Corollary~\ref{cor:quantum-p-q-influence}), and quantum high-order partial isoperimetric bounds (Corollary~\ref{cor:quantum-partial-isoper}).
\end{itemize}

Both of our main theorems arise from a unified two-step variance-decay framework. The first step (Lemmas~\ref{lem:quantum-energy-var-ineq} and~\ref{lem:quantum-high-order-influence-var-ineq}) establishes a general implication: for any individual observable, short-time variance decay along the depolarizing semigroup yields a corresponding Talagrand-type inequality. The second step (Lemmas~\ref{lem:quantum-energy-var-decay} and~\ref{lem:quantum-high-order-influence-var-decay}) estimates the rates of variance decay using hypercontractivity and log-convexity arguments.
This modular framework unifies first-order and high-order Talagrand-type phenomena through the variance-decay perspective, built on semigroup arguments of broad applicability.

The paper is organized as follows. Section~\ref{sec:preliminaries} introduces the basics of quantum Boolean analysis, including Fourier--Pauli analysis, the depolarizing semigroup, conditional expectations and derivatives, $\alpha$-gradients, and key functional inequalities. Section~\ref{sec:proofs-main} contains the proofs of our two main theorems. Section~\ref{sec:applications} derives the corollaries mentioned above; there, each corollary is accompanied by a brief recollection of the corresponding classical inequality for comparison. Several technical proofs and auxiliary lemmas are collected in the appendices.

\section{Quantum Boolean Analysis}\label{sec:preliminaries}

In this section, we develop the analytic framework for the quantum Boolean cube, emphasizing its parallels with the classical case and introducing the operator-valued structures that will be central to our results. We begin with the \emph{Fourier--Pauli expansion}, the quantum analogue of the Fourier--Walsh expansion on the classical cube, which allows us to decompose observables and express key quantities in terms of Fourier weights. We then turn to the \emph{depolarizing semigroup}, together with \emph{conditional expectations} and \emph{discrete derivatives}, which furnish the basic calculus underlying our analysis. Building on this, we introduce the \emph{$\alpha$-gradients} generalizing the \emph{carr\'e du champ}, and establish key analytic estimates underpinning our main results: the \emph{gradient bound}, \emph{Lipschitz smoothing}, and \emph{$L^p$-Poincar\'e} inequalities. Finally, we present the functionals of \emph{energies}, \emph{influences}, and \emph{partial variances}, which collectively form the analytic foundation for our Talagrand-type energy--variance and influence--variance inequalities.

\subsection{Quantum Boolean cube and Fourier--Pauli analysis}

As a noncommutative analogue of scalar functions on the classical Boolean cube, we consider \emph{observables} in the $n$-qubit matrix algebra $M_2(\Cbb)^{\otimes n}\cong M_{2^n}(\Cbb)$. Here, the expectation over the uniform measure is replaced by the \emph{normalized trace} $\tau:=2^{-n}\tr(\cdot)$. In analogy with the classical $L^p$ norms, we use the \emph{normalized Schatten-$p$ norms} on $ M_2(\mathbb{C})^{\otimes n}$:
\begin{equation*}
    \left\|A\right\|_p:=\tau (|A|^p)^{\frac{1}{p}} = \left(2^{-n}\sum_{k=1}^{2^n} \lambda_k(|A|)^p \right)^{\frac{1}{p}},\ 1\leq p<\infty,\quad \left\|A\right\|_\infty:= \max_k \lambda_k(|A|)
\end{equation*}
where $\{\lambda_k(|A|)\}_{k=1}^{2^n}$ denotes the eigenvalues of $|A|:=(A^\ast A)^{1/2}$. Correspondingly, we define the \emph{variance} of $A\in M_2(\mathbb{C})^{\otimes n}$ by $\Var(A):=\left\|A-\tau(A)\right\|_2^2$, where, by a convention used throughout the paper, we write $A-\tau(A)$ for $A-\tau(A)\1$, with $\1$ being the identity matrix in $M_2(\Cbb)^{\otimes n}$.

Following \cite[Definition 3.1]{MO2010A}, $A\in M_2(\mathbb{C})^{\otimes n}$ is called a \emph{quantum Boolean function} if $A$ is unitary Hermitian ($A=A^\ast$ and $A^2=\1$). A typical example is given by the Pauli matrices
\begin{equation*}
    \sigma_0=\begin{bmatrix}1&0\\0&1\end{bmatrix},\quad \sigma_1=\begin{bmatrix}0&1\\1&0\end{bmatrix},\quad
    \sigma_2=\begin{bmatrix}0&-i\\i&0\end{bmatrix},\quad \sigma_3=\begin{bmatrix}1&0\\0&-1\end{bmatrix}.
\end{equation*}
The classical observation \cite[Exercise 1.17]{Ryan2014book} for $\{-1,1\}$-valued functions analogously holds for quantum Boolean functions:
\begin{proposition}\label{prop:Boolean-L1-L2-var}
For every unitary Hermitian $A\in M_2(\mathbb{C})^{\otimes n}$, $\Var(A) = \|A-\tau(A)\|_{1}$.
\end{proposition}
\begin{proof}
Since $A$ is unitary and Hermitian, its spectrum consists of eigenvalues $\pm 1$. Thus we may write $A = \Pi_{+} - \Pi_{-}$, where $\Pi_{\pm}$ are the orthogonal projections onto the $\pm 1$-eigenspaces. Let $u:=\tau(\Pi_+)$, so that $\tau(\Pi_{-})=1-u$ because $\Pi_{+}+\Pi_{-}=\1$. Then
\begin{equation*}
    \tau(A) = \tau(\Pi_+) - \tau(\Pi_-) =2u-1 \implies |A-\tau(A)| = \left(2-2u\right) \Pi_+ + 2u\,\Pi_-.
\end{equation*}
Hence
\begin{align*}
    \left\|A-\tau(A)\right\|_{1} &= \tau(|A-\tau(A)|) = \left(2-2u\right) \tau(\Pi_+) + 2u\,\tau(\Pi_-)=4u\left(1-u\right)\\
    &= \left(2-2u\right)^2 \tau(\Pi_+) + \left(2u\right)^2 \tau(\Pi_-) = \tau(|A-\tau(A)|^2) = \Var(A),
\end{align*}
proving the claim.
\end{proof}

A basic feature of analysis on the classical Boolean cube is that every scalar function admits a Fourier--Walsh expansion in the character basis. In the quantum setting, the analogue of this is the \emph{Fourier--Pauli expansion} in the basis of tensor product Pauli matrices. More specifically, in the $n$-qubit algebra $M_2(\Cbb)^{\otimes n}$, an orthonormal basis with respect to the normalized Hilbert--Schmidt inner product $\langle X,Y\rangle := \tau(X^\ast Y)$ is given by
\begin{equation*}
    \sigma_s := \sigma_{s_1}\otimes\dots\otimes\sigma_{s_n}, \quad s=(s_1,\dots,s_n)\in\{0,1,2,3\}^n.
\end{equation*}
Thus every $A\in M_2(\Cbb)^{\otimes n}$ admits a unique Fourier--Pauli expansion
\begin{equation*}
    A=\sum_{s\in\{0,1,2,3\}^n}\widehat{A}_s\,\sigma_s,
    \qquad \widehat{A}_s:=\langle\sigma_s,A\rangle=\frac{1}{2^n}\tr(\sigma_s A).
\end{equation*}
In particular, if $A$ is Hermitian, then $\widehat{A}_s\in\R$ for all $s$.

Building on the Fourier--Pauli expansion, it is natural to organize the Fourier coefficients according to the degrees.
For $s\in\{0,1,2,3\}^n$, define its \emph{support} by $\supp(s):=\{j\in[n]: s_j\neq 0\}$, and its \emph{degree} (or \emph{level}) by $|\supp(s)|$. The \emph{Fourier weight} of $A\in M_2(\mathbb{C})^{\otimes n}$ at degree $d\in[n]$, as well as at degrees at least $d$, are then defined respectively by
\begin{equation}\label{eq:Fourier-weights}
    W^{=d}[A]:=\sum\limits_{\substack{s\in\{0,1,2,3\}^n\\ |\supp(s)|=d}}|\widehat{A}_s|^2,\quad W^{\ge d}[A]:= \sum_{m=d}^{n}W^{=m}[A].
\end{equation}
In this notation, the variance of $A$ can be expressed in terms of its Fourier weights:
\begin{equation}\label{eq:var-Fourier}
    \Var(A)=\sum\limits_{\substack{s\in\{0,1,2,3\}^n\\ s\neq 0}}|\widehat{A}_s|^2 = W^{\ge 1}[A].
\end{equation}

\subsection{Depolarizing semigroups, conditional expectations and derivatives}
The heat semigroup plays a central role in analysis on the classical Boolean cube, providing the framework for techniques from classical analysis. In the quantum setting, the natural analogue is the \emph{depolarizing semigroup} on $M_2(\Cbb)^{\otimes n}$ defined by
\begin{equation*}
P_t=e^{-t\mathcal L}:=\left(e^{-t}\mathbb{I}+(1-e^{-t})\frac 1 2\tr(\cdot) \1\right)^{\otimes n},\quad t\geq 0,
\end{equation*}
which is an {\em ergodic, tracially symmetric quantum Markov semigroup}. Here $\mathbb{I}$ denotes the identity map on $M_2(\mathbb{C})$. The semigroup $P_t$, and hence its generator $\Lcal$, are diagonal in the Pauli basis:
\begin{equation}\label{eq:semigroup-Fourier}
    P_t A=\sum_{s\in \{0,1,2,3\}^n}e^{-t\lvert \supp(s)\rvert}\widehat A_s\sigma_s, \quad \Lcal A = \sum_{s\in \{0,1,2,3\}^n}\left|\supp(s)\right|\widehat A_s\sigma_s.
\end{equation}
The classical functional inequalities extend naturally to the quantum setting~\cite{MO2010A}:
\begin{proposition}[Poincar\'{e} inequality and variance decay]
\label{prop:quantum-Poincare}
For every $A\in M_2(\Cbb)^{\otimes n}$,
\begin{equation}\label{eq:Poincare-ineq}
    \Var(A)\le \Ecal[A]:=\tup{A,\Lcal A},
\end{equation}
and equivalently, for all $t\geq 0$,
\begin{equation}\label{eq:Poincare-var-decay}
    \Var(P_t A)\le e^{-2t}\Var(A).
\end{equation}
\end{proposition}

\begin{proposition}[Log-Sobolev inequality and hypercontractivity]\label{prop:quantum-hyper}
For every $A\in M_2(\Cbb)^{\otimes n}$,
\begin{equation}\label{eq:log-Sobolev-ineq}
    \operatorname{Ent}[|A|^2]:= \tau\bigl(|A|^2 \ln(|A|^2)\bigr) -  \tau\bigl(|A|^2\bigr) \ln\bigl(\tau(|A|^2)\bigr) \leq 2\Ecal[A],
\end{equation}
and equivalently, for all $t\geq 0$,
\begin{equation}\label{eq:hypercontractivity}
    \|P_t A\|_{2}\leq \|A\|_{1+e^{-2t}}.
\end{equation}
\end{proposition}

As a tensor product semigroup, $P_t$ has the decomposition:
\begin{equation}\label{eq:semigroup-decomposition}
    P_t=\prod_{j=1}^n P_t^j,\quad P_t^{j}=\tau_j+e^{-t}d_j,
\end{equation}
and hence $\Lcal=\sum_{j=1}^n d_j$, where $\tau_j$ and $d_j$ are, respectively, the conditional expectation and the discrete derivative with respect to the $j$-th coordinate, defined by
\begin{equation*}
    \tau_j:=\mathbb{I}^{\otimes (j-1)}\otimes\, \frac 1 2\tr(\cdot) \1 \,\otimes \mathbb{I}^{\otimes(n-j)},\quad d_j:=\mathbb{I}^{\otimes n} - \tau_j = \mathbb{I}^{\otimes(j-1)}\otimes \left(\mathbb{I}-\frac 1 2\tr(\cdot) \1 \right)\otimes \mathbb{I}^{\otimes (n-j)}.
\end{equation*}
These are noncommutative analogues of the conditional expectation $\E_i$ and the discrete derivative $D_i$ on the classical Boolean cube.
Note that $\tau_j$'s and $d_j$'s are projections on $M_2(\Cbb)^{\otimes n}$ that commute with each other. Define the \emph{high-order} conditional expectation and discrete derivative with respect to nonempty $J\subseteq[n]$ by
\begin{equation*}
    \tau_J:=\prod_{j\in J}\tau_j,\quad d_J:=\prod_{j\in J} d_j.
\end{equation*}
Since $\tau_j$ is the projection onto the subalgebra independent of the $j$-th coordinate, the Fourier--Pauli expansion of $\tau_J$ and $d_J$ are, respectively, given by
\begin{equation}\label{eq:exp-der-Fourier}
    \tau_{J}(A) = \sum_{\substack{s\in \{0,1,2,3\}^n\\\supp(s)\subseteq J^{c}}}\widehat A_s \sigma_s, \quad d_{J} A = \sum_{\substack{s\in \{0,1,2,3\}^n\\\supp(s)\supseteq J}}\widehat A_s \sigma_s.
\end{equation}
Using the tensor product decomposition \eqref{eq:semigroup-decomposition} of $P_t$, we obtain:
\begin{lemma}\label{lem:j-derivative-decay}
Let $A\in M_2(\Cbb)^{\otimes n}$, $t\ge 0$, $J\subseteq[n]$ with $|J|=k\geq 1$, and $p \in [1,\infty]$. Then
\begin{equation*}
    \|d_JP_tA\|_p\le e^{-kt}\,\|d_JA\|_p.
\end{equation*}
In particular, for $k=1$ we have $\|d_jP_tA\|_p\le e^{-t}\|d_jA\|_p$.
\end{lemma}
\begin{proof}
Fix $t\geq 0$ and $J\subseteq[n]$ with $|J|=k\geq 1$. By \eqref{eq:semigroup-decomposition}, we can factorize $P_t=P^J_t P^{-J}_t$ with
\begin{equation*}
    P_t^J:=\prod_{j\in J} P_t^j,\quad P_t^{-J}:=\prod_{j\notin J} P_t^j,\quad P_t^{j}=\tau_j+e^{-t}d_j.
\end{equation*}
By commutation and the fact that $d_j P_t^j=e^{-t}d_j$, we obtain
\begin{equation*}
    d_JP_t=P^{-J}_t \prod_{i\in J}(d_i P_t^i) = e^{-kt}P^{-J}_t d_J.
\end{equation*}
Further note that $P^{-J}_t$ is also a tracially symmetric quantum Markov semigroup, which is unital and completely positive, and hence $L^p$-contractive. The result follows.
\end{proof}

Moreover, the derivatives enjoy a ``regularity'' under conditional expectations:
\begin{proposition}
\label{prop:derivative-regularity}
For every $A\in M_2(\Cbb)^{\otimes n}$ and $J\subseteq [n]$ with $|J|=k\geq 1$, we have
\begin{equation*}
    |d_J A|^2 \leq 3^{k}\tau_J(|d_J A|^2).
\end{equation*}
In particular, taking $k=1$ yields $|d_j A|^2 \leq 3 \Var_{j}(A)$, where $\Var_j(A):=\tau_j\bigl(|A-\tau_j(A)|^2\bigr)$.
\end{proposition}
\begin{remark}
When $k=1$, the analogue of Proposition \ref{prop:derivative-regularity} on the classical Boolean cube is the pointwise identity $|D_if|^2 = \E_i(|D_if|^2)$, since $|D_if|$ depends only on coordinates other than the $j$-th. In the quantum case, a Bell-state example shows that the constant $3$ is sharp. Consider $A=|\Phi^+\rangle\langle\Phi^+| \in  M_2(\Cbb)^{\otimes 2}$, where
\begin{equation*}
    |\Phi^+\rangle=\frac{1}{\sqrt2}(|0\rangle \otimes |0\rangle +|1\rangle \otimes |1\rangle),\quad
    |0\rangle :=
    \begin{bmatrix}
      1 \\
      0
    \end{bmatrix}
    ,\
    |1\rangle :=
    \begin{bmatrix}
      0 \\
      1
    \end{bmatrix}
    .
\end{equation*}
Then $\tau_1(A)= \frac{1}{4} (|0\rangle\langle0| + |1\rangle\langle1|) = \frac14 \1$, and hence
\begin{equation*}
    |d_1A|^2=\bigl|A-\tau_1(A)\bigr|^2=\frac12 A+\frac1{16}\1, \qquad \Var_{1}(A) = \tau_1(|d_1A|^2)=\frac{3}{16}\1.
\end{equation*}
Now testing $3\Var_{1}(A) - |d_1A|^2$ on $|\Phi^+\rangle$ gives
\begin{equation*}
    \langle\Phi^+| \left(3\Var_{1}(A) - |d_1A|^2 \right) |\Phi^+\rangle = \frac{9}{16} - \frac{1}{2} \langle\Phi^+|\, A\, |\Phi^+\rangle - \frac1{16} = 0,
\end{equation*}
which shows that the constant $3$ is attained and cannot be improved.
\end{remark}
\begin{proof}
Let $A\in M_2(\Cbb)^{\otimes n}$ and $J\subseteq [n]$ with $|J|=k\geq 1$. Organizing the Fourier--Pauli expansions according to the $J$-coordinates gives
\begin{equation*}
    A = \sum_{x\in \{0,1,2,3\}^{J}} A^{J}_{x}, \quad d_{J} A = \sum_{x\in \{1,2,3\}^{J}} A^{J}_{x},\quad  A^{J}_{x} := \sum_{s:s^J=x} \widehat{A}_s \sigma_{s},
\end{equation*}
where $s^{J}$ denotes the $J$-coordinates of $s$. Hence 
\begin{equation*}
    |d_J A|^{2} = \sum_{x,y\in \{1,2,3\}^{J}} (A^J_x)^{\ast} A^J_y,\quad \tau_J(|d_J A|^2) = \sum_{x\in \{1,2,3\}^{J}}  |A^J_x|^2.
\end{equation*}
To bound the cross-terms, expanding $|A^J_x -A^J_y|^2\geq 0$ yields
\begin{equation*}
    (A^J_x)^{\ast} A^J_y + (A^J_y)^{\ast} A^J_x \leq  |A^J_x|^2 + |A^J_y|^2.
\end{equation*}
Writing the expansion of $|d_J A|^{2}$ in symmetric sum gives
\begin{align*}
    2|d_J A|^{2} = \sum_{x,y\in \{1,2,3\}^{J}} \left[(A^J_x)^{\ast}  A^J_y+ (A^J_y)^{\ast} A^J_x\right]\leq \sum_{x,y\in \{1,2,3\}^{J}} \left[|A^J_x|^2 + |A^J_y|^2 \right] = 2\cdot 3^{k}\tau_J(|d_J A|^2).
\end{align*}
Canceling constant $2$ on both sides completes the proof.
\end{proof}

Using Proposition \ref{prop:derivative-regularity}, we can improve the trivial estimate $\left\|d_J\right\|_{\infty\to\infty}\le 2^{|J|}$:
\begin{proposition}\label{prop:derivative-infty-norm}
For every $J\subseteq[n]$ with $|J|=k\geq 1$, we have $\left\|d_J\right\|_{\infty\to\infty}\le \sqrt{3}^{k}$.
\end{proposition}
\begin{remark}
On the classical Boolean cube, due to the pointwise identity $|D_if|^2 = \E_i(|D_if|^2)$, we have an order-independent bound $\left\|D_J\right\|_{\infty\to\infty}\le 1$.
\end{remark}
\begin{proof}
Using the expansion according to $J$-coordinates, it follows clearly that
\begin{equation*}
    \tau_{J}(|d_{J} A|^{2}) = \sum_{x\in \{1,2,3\}^{J}}  |A^J_x|^2 \leq \sum_{x\in \{0,1,2,3\}^{J}}  |A^J_x|^2 = \tau_{J}(|A|^{2}).
\end{equation*}
Applying Proposition \ref{prop:derivative-regularity} yields
\begin{equation*}
    |d_{J} A|^{2} \leq 3^{k} \tau_{J}(|d_{J} A|^{2}) \leq 3^{k} \tau_{J}(|A|^{2}).
\end{equation*}
Further by the $L^\infty$-contractivity of the conditional expectation $\tau_J$, 
\begin{equation*}
    \left\|d_J A \right\|_\infty^{2} \leq 3^{k} \left\|\tau_{J}(|A|^{2})\right\|_\infty \leq 3^{k} \left\|A\right\|_\infty^{2}.
\end{equation*}
Taking square root on both sides completes the proof.
\end{proof}

\subsection{Carr\'e du champ and gradients}
\label{sec:gradients}
A key notion in semigroup analysis is the carr\'{e} du champ operator, which generalizes the square of gradients. For a tracially symmetric quantum Markov semigroup $P_t = e^{-t\Lcal}$, the associated carr\'{e} du champ operator is the nonnegative sesquilinear map given by
\begin{equation*}
    \Gamma(A,B) := \frac{1}{2}\left[(\Lcal A)^* B + A^* (\Lcal B) - \Lcal (A^* B)\right].
\end{equation*}
Its quadratic specialization $\Gamma(A):= \Gamma(A,A)\geq 0$ serves as the analogue of the squared gradient. The definition of the carr\'{e} du champ yields the following fundamental identity:
\begin{equation}\label{eq:grad-identity}
    P_t(|A|^2) - |P_t A|^2 = \int_0^t 2 P_s \Gamma(P_{t-s} A) \dif s.
\end{equation}

For the depolarizing semigroup $P_t=e^{-t\Lcal}$ on $M_2(\Cbb)^{\otimes n}$, recall that its generator decomposes as $\Lcal=\sum_{j=1}^n d_j$. Accordingly, the associated carr\'{e} du champ admits the natural splitting
\begin{equation*}
    \Gamma(A)=\sum_{j=1}^n \Gamma_j(A),\quad 2\Gamma_j(A):= d_j(A^\ast)A + A^\ast d_j(A) - d_j(A^\ast A)=\Var_j(A)+|d_jA|^2,
\end{equation*}
where each local term $\Gamma_j(A)$ lies exactly halfway between the conditional variance $\Var_j(A)$ and the pointwise squared derivative $|d_jA|^2$. This midpoint formulation naturally leads to a family of \emph{$\alpha$-gradients}:
\begin{equation}\label{def:alpha-gradient}
    |\nabla^{\alpha}A|^2 := \sum_{j=1}^{n} |\nabla^{\alpha}_{j} A|^2, \quad |\nabla^{\alpha}_{j} A|^2:= \left(1-\alpha\right)\Var_j(A) + \alpha\,|d_j A|^2,\quad \alpha\in [0,1],
\end{equation}
which interpolate between the conditional variances and the squared derivatives, and reproduce the carr\'{e} du champ when $\alpha=1/2$. In particular,
\begin{equation*}
    |\nabla A| := |\nabla^{1} A| = \left(\sum_{j=1}^{n} |d_jA|^2\right)^{1/2}
\end{equation*}
gives the canonical discrete gradient corresponding to $d_j$, $j\in [n]$. Moreover, since $\tau\circ\tau_j=\tau$, the $\alpha$-gradients share the same $L^2$-norm:
\begin{equation*}
    \bigl\||\nabla_j^{\alpha}A|\bigr\|_2^2 = \left(1-\alpha\right)\tau(\Var_j(A)) + \alpha\,\tau(|d_j A|^2) = \left\|d_j A\right\|_2^2.
\end{equation*}

Using Proposition~\ref{prop:derivative-regularity}, we can compare the $\alpha$-gradients for different values of $\alpha$:
\begin{lemma}[Gradient comparison]
\label{lem:grad-compare}
Let $A\in M_2(\Cbb)^{\otimes n}$, $j\in[n]$ and $\alpha,\beta\in [0,1]$. Then
\begin{equation}\label{ineq:bounding with two parameters}
    |\nabla_j^{\alpha}A|^{2}\geq B^{\text{\rm\ref{lem:grad-compare}}}(\alpha,\beta) |\nabla_j^{\beta}A|^{2},
\end{equation}
and consequently, $|\nabla^\alpha A|^2\ge B^{\text{\rm\ref{lem:grad-compare}}}(\alpha,\beta)\,|\nabla^\beta A|^2$, where
\begin{equation*}
    B^{\text{\rm\ref{lem:grad-compare}}}(\alpha,\beta) =
    \begin{cases}
        \frac{1+2\alpha}{1+2\beta}, & \alpha\leq \beta,\\
        \frac{1-\alpha}{1-\beta}, & \alpha\geq \beta.
    \end{cases}
\end{equation*}
In particular, taking $\beta=1$ yields $|\nabla^\alpha A|\geq \bigl(\frac{1+2\alpha}{3}\bigr)^{1/2} |\nabla A|$.
\end{lemma}
\begin{remark}
On the classical Boolean cube, one has the pointwise identity $\Var_{i}(f)= |D_i f|^2$. Accordingly, the analogue of Lemma~\ref{lem:grad-compare} reduces to the trivial identity
$|\nabla_i^\alpha f|^2=|D_i f|^2$, so that $|\nabla^\alpha f|=|\nabla f|$ for all $\alpha\in[0,1]$.
\end{remark}
\begin{proof}
Let $A\in M_2(\Cbb)^{\otimes n}$ and $j\in[n]$. Applying Proposition \ref{prop:derivative-regularity} with $k=1$ yields
\begin{equation*}
    0\leq |d_j A|^2 \leq 3\Var_j(A).
\end{equation*}
\begin{itemize}
    \item If $\alpha\geq \beta$, then $\frac{\alpha}{1-\alpha}\geq \frac{\beta}{1-\beta}$, and hence
    \begin{align*}
        |\nabla^{\alpha}_{j} A|^2 &=  (1-\alpha) \Var_j(A) + \alpha |d_j A|^2 =  \frac{1-\alpha}{1-\beta} \left((1-\beta) \Var_j(A) + \alpha \frac{1-\beta}{1-\alpha}  |d_j A|^2\right)\\
        &\geq \frac{1-\alpha}{1-\beta} \left((1-\beta) \Var_j(A) + \beta |d_j A|^2\right) = \frac{1-\alpha}{1-\beta}\, |\nabla^{\beta}_{j} A|^2.
    \end{align*}
    \item If $\alpha\leq \beta$, then
    \begin{align*}
        |\nabla^{\alpha}_{j} A|^2 &= (1-\alpha) \Var_j(A) + \alpha |d_j A|^2 \\
        &\geq \left(1-\alpha-3\frac{\beta-\alpha}{1+2\beta}\right) \Var_j(A) + \left(\alpha + \frac{\beta-\alpha}{1+2\beta}\right) |d_j A|^2 \\
        &=\frac{1+2\alpha}{1+2\beta} \left((1-\beta) \Var_j(A) + \beta |d_j A|^2\right) = \frac{1+2\alpha}{1+2\beta} \, |\nabla^{\beta}_{j} A|^2.
    \end{align*}
\end{itemize}
Combining the estimates in respective cases completes the proof.
\end{proof}

Gradient bound estimates describe how gradients behave under the semigroup flow, forming a cornerstone of the  Bakry--\'{E}mery calculus. A complete study of such estimates for general quantum Markov semigroups was carried out in \cite{Wirth2021complete}. In the quantum Boolean setting, applying \eqref{eq:semigroup-decomposition} together with Lemma~\ref{lem:grad-compare}, we establish gradient bound estimates for the $\alpha$-gradients under the depolarizing semigroup, refining the standard bound
\begin{equation}\label{eq:grad-estimate-w2021}
    \Gamma(P_t A) \leq e^{-t} P_t\Gamma(A)
\end{equation}
provided in \cite[Example 1]{Wirth2021complete} (see also the restatement in \cite[Lemma 2.4]{Rouze2024quantum}).
\begin{proposition}[Gradient bound]\label{prop:Gradient-estimate}
For $A\in M_2(\Cbb)^{\otimes n}$, $j\in [n]$, $\alpha\in [0,1]$, and $t\geq 0$,
\begin{equation*}
    |\nabla^{\alpha}_j P_t A|^{2} \leq C^{\text{\rm\ref{prop:Gradient-estimate}}}_{\alpha}(t) P_t (|\nabla^{\alpha}_j A|^{2}),
\end{equation*}
and consequently, $|\nabla^{\alpha} P_t A|^{2} \leq  C^{\text{\rm\ref{prop:Gradient-estimate}}}_{\alpha}(t)  P_t (|\nabla^{\alpha} A|^{2})$, where $C^{\text{\rm\ref{prop:Gradient-estimate}}}_{\alpha}(t):=\frac{1+2\alpha}{e^{2t} + 2\alpha e^{t}}$.
\end{proposition}
\begin{remark}
Here $C^{\text{\rm\ref{prop:Gradient-estimate}}}_{\alpha}(t)\leq e^{-\frac{2+2\alpha}{1+2\alpha}t}$. In particular, taking $\alpha=1/2$ yields
\begin{equation*}
    \Gamma(P_t A) \leq \frac{2}{e^{2t} + e^{t}} P_t \Gamma(A) \leq e^{-\frac{3}{2}t}P_t \Gamma(A),
\end{equation*}
which provides a sharper decay rate than \eqref{eq:grad-estimate-w2021}. Moreover, the gradient bound estimate on the classical Boolean cube is given by $\Gamma(P_t f) \leq e^{-2t} P_t\Gamma(f)$.
\end{remark}
\begin{proof}
Let $A\in M_2(\Cbb)^{\otimes n}$, $j\in[n]$, $\alpha\in[0,1]$, and $t\geq 0$. By \eqref{eq:semigroup-decomposition}, we can factorize $P_t= P^j_t P^{-j}_t$ with $P^{-j}_t := \prod_{i\neq j} P^i_t$, where $P^i_t = \tau_i+e^{-t}d_i$, for $i\in [n]$. We estimate the behavior of $|\nabla_j^{\alpha} \cdot|$ under $P^j_t$ and $P^{-j}_t$ respectively:
\begin{itemize}
    \item We first prove that $|\nabla^{\alpha}_j P^j_t A|^{2} \leq C^{\text{\rm\ref{prop:Gradient-estimate}}}_{\alpha}(t)\, P^j_t (|\nabla^{\alpha}_j A|^{2})$. Since $d_j P_t^j=e^{-t}d_j$, we have
    \begin{align*}
        |\nabla^{\alpha}_j P^j_t A|^{2} &= \left(1-\alpha\right)\tau_j(|d_jP_t^jA|^2) + \alpha\, |d_jP_t^jA|^2\\
        &= e^{-2t} \left(\left(1-\alpha\right)\tau_j(|d_j A|^2) + \alpha\, |d_j A|^2\right) = e^{-2t} |\nabla^{\alpha}_j A|^{2}.
    \end{align*}
    On the other hand, applying Lemma~\ref{lem:grad-compare} with $\tau_{j}(|\nabla^{\alpha}_j A|^{2}) = \Var_j(A) = |\nabla^{0}_j A|^{2}$ yields
    \begin{align*}
        P_t^j(|\nabla^{\alpha}_j A|^{2})& = e^{-t}|\nabla^{\alpha}_j A|^{2}+\left(1-e^{-t}\right)\tau_j(|\nabla^{\alpha}_j A|^{2})\\
        &\geq \left(e^{-t} + B^{\text{\rm\ref{lem:grad-compare}}}(0,\alpha) \right) |\nabla^{\alpha}_j A|^{2} =  \frac{1+2\alpha e^{-t}}{1+2\alpha}\,|\nabla^{\alpha}_j A|^{2}.
    \end{align*}
    Combining the bounds we obtain
    \begin{equation*}
        |\nabla^{\alpha}_j P^j_t A|^{2}=e^{-2t}|\nabla^{\alpha}_j A|^{2}\le e^{-2t}\frac{1+2\alpha}{1+2\alpha e^{-t}}\, P_t^j (|\nabla^{\alpha}_j A|^{2}) = C^{\text{\rm\ref{prop:Gradient-estimate}}}_{\alpha}(t)\,  P^j_t (|\nabla^{\alpha}_j  A|^{2})
    \end{equation*}
    as desired.
    \item We then prove that $|\nabla^{\alpha}_j P^{-j}_t A|^{2} \leq P^{-j}_t (|\nabla^{\alpha}_j A|^{2})$. Note that $P^{-j}_t$ commutes with $\tau_{j}$ and $d_{j}$, and is unital and completely positive. By the Kadison--Schwarz inequality (Lemma~\ref{lem:Kadison--Schwarz}),
    \begin{equation*}
        |d_j P^{-j}_t A|^{2} = |P^{-j}_t d_j  A|^{2} \leq P^{-j}_t(|d_j  A|^{2}).
    \end{equation*}
    Hence we obtain
    \begin{align*}
        |\nabla^{\alpha}_j P^{-j}_t A|^{2} &= \left(1-\alpha\right)\tau_j(|d_j P^{-j}_t A|^{2}) + \alpha\,|d_j P^{-j}_t A|^2\\
        &\leq \left(1-\alpha\right)P^{-j}_t \tau_j(|d_j  A|^{2}) + \alpha\, P^{-j}_t(|d_j  A|^2) =  P^{-j}_t(|\nabla^{\alpha}_j A|^{2})
    \end{align*}
    as desired.
\end{itemize}
Finally, combining the estimates yields
\begin{equation*}
    |\nabla^{\alpha}_j P_t A|^{2} = |\nabla^{\alpha}_j P_t^j P^{-j}_t A|^{2}\leq C^{\text{\rm\ref{prop:Gradient-estimate}}}_{\alpha}(t)\, P_t^j (|\nabla^{\alpha}_j P^{-j}_t A|^{2}) \leq  C^{\text{\rm\ref{prop:Gradient-estimate}}}_{\alpha}(t)\, P_t (|\nabla_j^{\alpha}A|^2),
\end{equation*}
which completes the proof.
\end{proof}

Combining the gradient bound estimates in Proposition~\ref{prop:Gradient-estimate} with the fundamental identity \eqref{eq:grad-identity}, we obtain the following estimate. Here, the case $q=\infty$ is the sharpest in our approach and precisely expresses a dimension-free $L^\infty$-to-Lipschitz smoothing property of the depolarizing semigroup, which serves as a key input in the proof of Theorem~\ref{thm:energy-variance}.
\begin{proposition}[Smoothing property]\label{prop:Lipschitz-smoothing}
For $A\in M_2(\Cbb)^{\otimes n}$, $\alpha\in [0,1]$, $q\in [2,\infty]$, and $t> 0$,
\begin{equation}
    \left\||\nabla^{\alpha} P_t A| \right\|_{q} \leq G^{\text{\rm\ref{prop:Lipschitz-smoothing}}}_{\alpha}(t)^{-1/2} \|A\|_{q},
\end{equation}
where
\begin{equation*}
G^{\text{\rm\ref{prop:Lipschitz-smoothing}}}_{\alpha}(t):=
\begin{cases}
\dfrac{(e^t-1)(e^t+1+4\alpha)}{2(1-\alpha)(1+2\alpha)}, & \alpha\le 1/2,\\[3mm]
\dfrac{(e^t-1)(e^t+3)}{1+2\alpha}, & \alpha\ge 1/2.
\end{cases}
\end{equation*}
\end{proposition}
\begin{remark}
In particular, taking $\alpha=1$ and $q=\infty$ yields
\begin{equation}\label{ineq:Lipschitz-smoothing}
    \left\||\nabla P_t A| \right\|_{\infty} \leq G^{\text{\rm\ref{prop:Lipschitz-smoothing}}}_{1}(t)^{-1/2} \left\|A\right\|_{\infty},
\end{equation}
or equivalently, $\left\|P_t\right\|_{\infty\to\text{Lip}} \leq G^{\text{\rm\ref{prop:Lipschitz-smoothing}}}_{1}(t)^{-1/2}$, with $\left\|\,\cdot\,\right\|_{\text{Lip}}:= \left\||\nabla \cdot|\right\|_{\infty}$ the Lipschitz seminorm. Systematic applications of such Lipschitz smoothing property in classical functional and geometric inequalities can be found in \cite{de2025properties}. Moreover, the same approach applied on the classical Boolean cube yields the Lipschitz smoothing estimate $\left\||\nabla P_t f| \right\|_{\infty} \leq (e^{2t}-1)^{-1/2} \left\|f\right\|_{\infty}$.
\end{remark}
\begin{proof}
Let $A\in M_2(\Cbb)^{\otimes n}$, $\alpha\in [0,1]$ and $t>0$. We first prove that 
\begin{equation*}
    P_t(|A|^2) \geq G^{\text{\rm\ref{prop:Lipschitz-smoothing}}}_{\alpha}(t)\, |\nabla^{\alpha} P_{t} A|^{2}.
\end{equation*}
The carr\'{e} du champ identity \eqref{eq:grad-identity} yields
\begin{equation*}
    P_t(|A|^2) \geq P_t(|A|^2) - |P_t A|^2 = \int_0^t 2 P_s \Gamma(P_{t-s} A) \dif s.
\end{equation*}
For $\beta\in [0,1]$ and $s\in [0,t]$, by Proposition~\ref{prop:Gradient-estimate} and Lemma~\ref{lem:grad-compare}, we deduce
\begin{align*}
    P_s \Gamma(P_{t-s} A) &\geq B^{\text{\rm\ref{lem:grad-compare}}}(1/2,\beta)\, P_s (|\nabla^{\beta} P_{t-s} A|^{2}) \geq B^{\text{\rm\ref{lem:grad-compare}}}(1/2,\beta)\, C^{\text{\rm\ref{prop:Gradient-estimate}}}_\beta(s)^{-1} |\nabla^\beta P_s P_{t-s} A|^2\\
    &= B^{\text{\rm\ref{lem:grad-compare}}}(1/2,\beta)\, C^{\text{\rm\ref{prop:Gradient-estimate}}}_\beta(s)^{-1} |\nabla^\beta P_tA|^2 \geq B^{\text{\rm\ref{lem:grad-compare}}}(1/2,\beta)\, C^{\text{\rm\ref{prop:Gradient-estimate}}}_\beta(s)^{-1}\,B^{\text{\rm\ref{lem:grad-compare}}}(\beta,\alpha) |\nabla^\alpha P_tA|^2.
\end{align*}
Note that
\begin{equation*}
    \sup_{\beta\in [0,1]} B^{\text{\rm\ref{lem:grad-compare}}}(1/2,\beta)\, C^{\text{\rm\ref{prop:Gradient-estimate}}}_\beta(s)^{-1}B^{\text{\rm\ref{lem:grad-compare}}}(\beta,\alpha) \geq 
    \begin{cases}
        B^{\text{\rm\ref{lem:grad-compare}}}(1/2,\alpha)\, C^{\text{\rm\ref{prop:Gradient-estimate}}}_\alpha(s)^{-1} = \frac{e^{2t} + 2\alpha e^{t}}{{2(1-\alpha)(1+2\alpha)}}, & \alpha\leq 1/2, \\[2mm]
        C^{\text{\rm\ref{prop:Gradient-estimate}}}_{1/2}(s)^{-1} B^{\text{\rm\ref{lem:grad-compare}}}(1/2,\alpha) = \frac{e^{2t} + e^{t}}{1+2\alpha},  & \alpha\geq 1/2.
    \end{cases}
\end{equation*}
Combining the bounds, and taking supremum over $\beta\in [0,1]$, we obtain
\begin{equation*}
    P_t(|A|^2) \geq \int_0^t 2 P_s \Gamma(P_{t-s} A) \dif s \geq  G^{\text{\rm\ref{prop:Lipschitz-smoothing}}}_{\alpha}(t)\, |\nabla^{\alpha} P_{t} A|^{2},
\end{equation*}
with
\begin{equation*}
    G^{\text{\rm\ref{prop:Lipschitz-smoothing}}}_{\alpha}(t) =
    \begin{cases}
        \displaystyle\int_0^t 2\, \frac{e^{2t} + 2\alpha e^{t}}{{2(1-\alpha)(1+2\alpha)}} \dif s = \frac{(e^t-1)(e^t+1+4\alpha)}{2(1-\alpha)(1+2\alpha)}, & \alpha\leq 1/2, \\[4mm]
        \displaystyle\int_0^t 2\, \frac{e^{2t} + e^{t}}{1+2\alpha} \dif s = \frac{(e^t-1)(e^t+3)}{1+2\alpha},  & \alpha\geq 1/2.
    \end{cases}
\end{equation*}
as desired.

Finally, for $q\in [2,\infty]$, by the contraction property of $P_t$ under the $q/2$-norm, we obtain
\begin{align*}
    \left\||\nabla^{\alpha} P_t A|\right\|_{q}^2 = \left\||\nabla^{\alpha} P_t A|^2\right\|_{q/2} &\leq G^{\text{\rm\ref{prop:Lipschitz-smoothing}}}_{\alpha}(t)^{-1} \left\|P_t (|A|^2)\right\|_{q/2}\\
    &\leq G^{\text{\rm\ref{prop:Lipschitz-smoothing}}}_{\alpha}(t)^{-1} \left\||A|^2\right\|_{q/2} =  G^{\text{\rm\ref{prop:Lipschitz-smoothing}}}_{\alpha}(t)^{-1} \left\|A\right\|_{q}^2,
\end{align*}
which completes the proof.
\end{proof}

Using a canonical duality argument (see, e.g., \cite{Ledoux2004SpectralGL,Ivanisvili2019improvingconstantendpointpoincare}), we derive the following $L^p$-Poincar\'e inequality from Proposition~\ref{prop:Lipschitz-smoothing}. When $p=1$, our result yields a sharper constant compared to the standard $L^1$-Poincar\'{e} inequality
\begin{equation}\label{eq:Rouze-quantum-L1-Poincare}
    \bigl\|\sqrt{\Gamma(A)}\bigr\|_{1} \geq \frac{1}{\sqrt{2}\pi} \left\|A-\tau(A)\right\|_{1}
\end{equation}
provided in \cite[Theorem 6.14]{Rouze2024quantum}.
\begin{proposition}[$L^p$-Poincar\'{e} inequality]
\label{prop:quantum-p-Poincare}
For all $A\in M_2(\Cbb)^{\otimes n}$ and $p\in[1,2]$,
\begin{equation*}
    \left\||\nabla A|\right\|_{p} \geq \frac{3}{2\pi} \left\|A-\tau(A)\right\|_{p}.
\end{equation*}
\end{proposition}
\begin{remark}
Using Lemma \ref{lem:grad-compare}, we further obtain $L^p$-Poincar\'{e} estimates for the $\alpha$-gradients:
\begin{equation*}
    \left\||\nabla^\alpha A|\right\|_{p} \geq \left(\frac{1+2\alpha}{3}\right)^{\frac{1}{2}} \left\||\nabla A|\right\|_{p} \geq \frac{3}{2\pi} \left(\frac{1+2\alpha}{3}\right)^{\frac{1}{2}}\left\|A-\tau(A)\right\|_{p}.
\end{equation*}
When $p=1$ and $\alpha=1/2$, this improves the constant in \eqref{eq:Rouze-quantum-L1-Poincare}. Moreover, the same approach applied on the classical Boolean cube yields the result with constant $2/\pi$ (see, e.g., \cite{Ivanisvili2019improvingconstantendpointpoincare}).
\end{remark}
\begin{proof}
Let $A\in M_2(\Cbb)^{\otimes n}$ and $p\in[1,2]$. We first prove that for all $t>0$,
\begin{equation*}
    \left\|A-P_tA\right\|_{p} \leq \left\||\nabla A|\right\|_p \int_{0}^{t}G^{\text{\rm\ref{prop:Lipschitz-smoothing}}}_{1}(s)^{-1/2}  \dif s.
\end{equation*}
For $B\in M_2(\Cbb)^{\otimes n}$, by $A-P_tA=\int_0^t \Lcal P_sA \dif s$ and the projection property of $d_j$,
\begin{equation*}
    \tup{A-P_tA, B}=\int_0^t \tup{\Lcal P_sA, B}\dif s = \int_0^t \sum_{j=1}^n \tup{d_j P_sA, B}\dif s = \int_0^t \sum_{j=1}^n \tup{d_j P_sA, d_j B}\dif s.
\end{equation*}
Since $P_s$ is $\tau$-symmetric and commutes with $d_j$, $\tup{d_j P_sA, d_j B} = \tup{d_j A, d_j P_s B}$, and hence
\begin{equation*}
     \tup{A-P_tA, B} = \int_0^t \sum_{j=1}^n \tup{d_j A, d_j P_s B} \dif s = \int_0^t \tau\bigg(\sum_{j=1}^n(d_jA)^*d_j(P_sB)\bigg) \dif s.
\end{equation*}
By the noncommutative H\"older inequality (Lemma~\ref{lem:nc-holder}), 
\begin{equation*}
    \left|\tau\bigg(\sum_{j=1}^n(d_jA)^*d_j(P_sB)\bigg)\right|\le \left\|\bigg(\sum_{j=1}^n |d_j A|^2\bigg)^{1/2}\right\|_p \left\|\bigg(\sum_{j=1}^n |d_jP_s B|^2\bigg)^{1/2}\right\|_{p'} = \left\||\nabla A|\right\|_p \left\||\nabla P_sB|\right\|_{p'},
\end{equation*}
where $p'=\frac{p}{p-1}\in[2,\infty)$. Applying Proposition~\ref{prop:Lipschitz-smoothing} with $\alpha=1$, we deduce
\begin{align*}
    \left|\tup{A-P_tA, B}\right| &\leq \int_0^t \left|\tau\bigg(\sum_{j=1}^n(d_jA)^*d_j(P_sB)\bigg)\right| \dif s \\
    &\leq \int_0^t \left\||\nabla A|\right\|_p \left\||\nabla P_sB|\right\|_{p'} \dif s \leq \left\||\nabla A|\right\|_p \left\||\nabla B|\right\|_{p'} \int_{0}^{t}G^{\text{\rm\ref{prop:Lipschitz-smoothing}}}_{1}(s)^{-1/2}  \dif s.
\end{align*}
Now by the norm duality, we obtain 
\begin{equation*}
    \left\|A-P_tA\right\|_{p} = \sup_{\|B\|_{p^{\prime}}=1} \left|\tau((A-P_t A)^{\ast} B)\right| \leq \left\||\nabla A|\right\|_p \int_{0}^{t}G^{\text{\rm\ref{prop:Lipschitz-smoothing}}}_{1}(s)^{-1/2}  \dif s
\end{equation*} 
as desired.

Finally, by the ergodicity of $P_t$, i.e., $P_tA\rightarrow \tau(A)\mathbf 1$ as $t\rightarrow\infty$, we obtain
\begin{equation*}
    \left\|A-\tau(A)\right\|_{p}\leq \left\||\nabla A|\right\|_{p} \int_{0}^{\infty}G^{\text{\rm\ref{prop:Lipschitz-smoothing}}}_{1}(s)^{-1/2}  \dif s.
\end{equation*}
Recalling $G^{\text{\rm\ref{prop:Lipschitz-smoothing}}}_1(s)=\frac13(e^s+3)(e^s-1)$, the substitution $u=\sqrt{\frac{e^s-1}{e^s+3}}$ yields
\begin{equation*}
    \int_0^\infty G^{\text{\rm\ref{prop:Lipschitz-smoothing}}}_1(s)^{-1/2}\dif s =2\sqrt3\int_0^1 \frac{\dif u}{1+3u^2} =2\arctan(\sqrt3) = \frac{2\pi}{3},
\end{equation*}
which completes the proof.
\end{proof}

\subsection{Energies, influences and partial variances}

The notion of energy originates in classical analysis. Dating back to Dirichlet's principle and the study of harmonic functions, the Dirichlet energy functional $\Ecal[f]=\left\||\nabla f|\right\|_2^2$ plays a central role in the theory of Sobolev spaces and Dirichlet forms. A natural extension is the $p$-energy, $\Ecal_p[f]=\left\||\nabla f|\right\|_p^p$, which underlies nonlinear potential theory and the variational characterization of solutions to the $p$-Laplace equation.
In particular, when $p=1$, $\Ecal_1[f] = \left\||\nabla f|\right\|_1$ is often interpreted as the total variation, a quantity closely related to boundary measure and crucial in the study of isoperimetric inequalities. In the quantum Boolean setting, for $A\in M_2(\Cbb)^{\otimes n}$ and $p\ge 1$, we analogously define the \emph{$p$-energy} of $A$ by
\begin{equation*}
    \Ecal_p[A]:=\left\||\nabla A|\right\|_p^p,
\end{equation*}
which plays the same role for quantum Boolean cubes, extending the classical notion to the noncommutative framework. Similarly, when $p=1$, $\Ecal_1[A] = \left\||\nabla A|\right\|_1$ is called the \emph{total variation}; when $p=2$, $\Ecal_2[A]$ gives the \emph{Dirichlet energy} $\Ecal[A]$ as in Proposition \ref{prop:quantum-Poincare} and \ref{prop:quantum-hyper}:
\begin{equation*}
    \Ecal_2[A] = \left\||\nabla A|\right\|_2^2 = \sum_{j=1}^n \left\|d_j A\right\|_2^2 = \tup{A,\Lcal A} = \Ecal[A].
\end{equation*}

In parallel, the notion of influence was first introduced for classical Boolean functions to quantify the sensitivity of a function to its coordinates, and has since become a cornerstone of threshold phenomena and concentration of measure. More specifically, for a Boolean function $f:\{-1,1\}^{n}\to \{-1,1\}$, the influence of the $i$-th coordinate on $f$ is defined as the probability that flipping the $i$-th bit changes the value of $f$. This probabilistic definition coincides with the analytic formulation $\Inf_i[f]:=\|D_i f\|_2^2$ for scalar-valued functions, which can be interpreted as a ``partial energy'' localized to the $i$-th coordinate. The concept has since been extended both to other product spaces, such as the standard Gaussian setting, and to different $L^p$-norms, such as the geometric influence $\Inf_i^1[f]:=\|D_i f\|_1$ for $p=1$; see, e.g., \cite{CL2012hypercontr,keller2012geometric,keller2014geometricII}. In the quantum Boolean setting, for $A\in M_2(\Cbb)^{\otimes n}$, $j\in [n]$ and $p\ge 1$, we analogously define the \emph{$p$-influence} of the $j$-th coordinate on $A$, and the \emph{total $p$-influence} on $A$, respectively by
\begin{equation*}
    \Inf^p_j[A]:=\left\|d_j A\right\|_p^p,\quad \Inf^p[A] := \sum_{j=1}^n \Inf^p_j[A],
\end{equation*}
which extend the classical notion to the noncommutative framework. Similarly, when $p=1$, $\Inf^1_j[A] = \left\|d_j A\right\|_1$ is called the \emph{geometric influence}; when $p=2$, the superscript $p$ is usually omitted, and the total influence coincides with the Dirichlet energy:
\begin{equation*}
    \Inf[A] = \sum_{j=1}^n \Inf_j [A] = \sum_{j=1}^n \left\|d_j A\right\|_2^2 = \Ecal[A].
\end{equation*}
Further for general $1\leq p\leq 2$, Lemma~\ref{lem:subadditivity} yields
\begin{equation}
\label{eq:Lp-energy-infuence-comparing}
    \Ecal_p[A] = \tau\left(\Bigl(\sum_{j=1}^n|d_j A|^2\Bigr)^{\frac{p}{2}}\right) \leq \sum_{j=1}^n\tau(|d_jA|^{p})=\Inf^p[A].
\end{equation}

In many problems, first-order information is not sufficient: one needs to quantify the effect of a coalition of variables. This motivates the notion of \emph{high-order influence}, which captures the sensitivity of a function to simultaneous perturbations on a set of coordinates. For nonempty $J\subseteq[n]$, we similarly define the $p$-influence of the $J$-coordinates on $A$ by
\begin{equation*}
    \Inf_J^p[A]:=\left\|d_JA\right\|_p^p.
\end{equation*}

A key observation is that variance can be represented as the integral of total influence along the depolarizing semigroup. Indeed, due to the ergodicity, i.e., $P_t\rightarrow\tau(\cdot) \1$ as $t\rightarrow\infty$, we have
\begin{equation}\label{eq:var-influence-representation}
    \Var(A) = \int_0^\infty2\,\Ecal[P_tA]\dif t = \int_0^\infty2\Inf[P_tA]\dif t.
\end{equation}
Replacing the total influence with high-order influences in \eqref{eq:var-influence-representation} leads to the notion of \emph{partial variances}, which underlie the semigroup-based high-order analysis. For nonempty $J\subseteq[n]$, we define the partial variance restricted to the $J$-coordinates by
\begin{equation}\label{eq:def-local-variance}
    V_J(A):=\int_0^\infty2\Inf_J[P_tA]\dif t=\int_0^\infty2\left\|d_JP_tA\right\|_2^2\,\dif t = \left\|R_J(A)\right\|_2^2.
\end{equation}
where $R_J(A) := d_J \Lcal^{-1/2}(A-\tau(A))$ is the \emph{Riesz transform} with respect to the $J$-coordinates.

Applying Parseval's identity to \eqref{eq:semigroup-Fourier} and \eqref{eq:exp-der-Fourier} yields the Fourier--Pauli formulas:
\begin{equation}\label{eq:high-order-influence-variance-Fourier-formula}
    \Inf_J[A] = \sum_{\substack{s\in \{0,1,2,3\}^n\\\supp(s)\supseteq J}} |\widehat{A}_s|^2,\quad  V_J(A)=\sum_{\substack{s\in \{0,1,2,3\}^n\\\supp(s)\supseteq J}}\frac1{|\supp(s)|}|\widehat{A}_s|^2.
\end{equation}
Consequently, we have the following restricted Poincar\'{e} inequality and Fourier weight bound:
\begin{proposition}\label{prop:restricted-Poincare}
For every $A\in M_2(\Cbb)^{\otimes n}$ and $J\subseteq[n]$ with $|J|=k\geq 1$,
\begin{equation}\label{eq:restricted-Poincare-ineq}
    k V_J(A)\le \Inf_J[A].
\end{equation}
and equivalently, for all $t\geq 0$,
\begin{equation}\label{eq:restricted-Poincare-decay}
    V_J(P_tA)\le e^{-2kt}V_J(A).
\end{equation}
\end{proposition}
\begin{proof}
Comparing the Fourier--Pauli formulas in \eqref{eq:high-order-influence-variance-Fourier-formula} proves \eqref{eq:restricted-Poincare-ineq}. For \eqref{eq:restricted-Poincare-decay}, fix $A\in M_2(\Cbb)^{\otimes n}$ and $J\subseteq[n]$ with $|J|=k\geq 1$. Using  \eqref{eq:restricted-Poincare-ineq} applied to $P_t A$, we obtain
\begin{equation*}
    \frac{\dif}{\dif t} \left(e^{2kt}\, V_J(P_tA)\right) =  2e^{2kt} \left(k\,V_J(P_tA)-\Inf_J[P_tA]\right) \leq 0,
\end{equation*}
i.e., $t\mapsto e^{2kt}\, V_J(P_tA)$ is decreasing, which proves \eqref{eq:restricted-Poincare-decay}.
\end{proof}

\begin{proposition}\label{prop:sumVarJ-Fourier}
For every $A\in M_2(\Cbb)^{\otimes n}$ and $k\in [n]$,
\begin{equation}\label{eq:sumVarJ}
\sum_{\substack{J\subseteq[n]\\ |J|=k}}k\,V_J(A)
\ge W^{\ge k}[A].
\end{equation}
\end{proposition}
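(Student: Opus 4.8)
The plan is to pass through the Fourier--Pauli expansion \eqref{eq:Fourier-formula of restricted variance} and reduce the claimed inequality to a pointwise comparison of Fourier coefficients. Starting from
\[
\sum_{|J|=k}V_J(A)
=\sum_{|J|=k}\ \sum_{s:\,J\subseteq\supp(s)}\frac{1}{|\supp(s)|}\,|\widehat A_s|^2,
\]
I would exchange the order of summation so that the outer sum runs over Pauli strings $s$ with $|\supp(s)|\ge k$ and the inner sum counts the $k$-subsets $J$ with $J\subseteq\supp(s)$. For a fixed $s$ with $|\supp(s)|=d\ge k$, the number of such $J$ is exactly $\binom{d}{k}$, so the double sum collapses to a single sum
\[
\sum_{|J|=k}V_J(A)
=\sum_{s:\,|\supp(s)|\ge k}\frac{\binom{|\supp(s)|}{k}}{|\supp(s)|}\,|\widehat A_s|^2 .
\]

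The remaining step is the elementary combinatorial estimate $\binom{d}{k}\ge d/k$ for all integers $d\ge k\ge 1$: indeed $\binom{d}{k}\ge\binom{d}{1}=d$ when $k=1$, and for $k\ge 2$ one has $\binom{d}{k}\ge d$ already (it equals $d$ at $d=k$ only when $k=1$, and is strictly larger otherwise), so in all cases $\binom{d}{k}/d\ge 1/k$. Plugging this in gives
\[
\sum_{|J|=k}V_J(A)\ \ge\ \frac{1}{k}\sum_{s:\,|\supp(s)|\ge k}|\widehat A_s|^2\ =\ \frac{1}{k}\,W^{\ge k}[A],
\]
where the last equality is the definition of $W^{\ge k}[A]$.

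There is no real obstacle here; the only point to get right is the direction of the combinatorial bound $\binom{d}{k}\ge d/k$ and the bookkeeping of the Fubini exchange, both of which are routine. One could phrase the combinatorial inequality slightly differently — e.g. $\binom{d}{k}=\frac{d}{k}\binom{d-1}{k-1}\ge\frac{d}{k}$ since $\binom{d-1}{k-1}\ge 1$ whenever $d\ge k\ge 1$ — which makes the factor $1/k$ transparent and is perhaps the cleanest way to present it.
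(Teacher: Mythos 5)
Your proposal is correct and is essentially identical to the paper's proof: the same Fubini exchange, the same count of $\binom{|\supp(s)|}{k}$ subsets, and the same identity $\binom{d}{k}=\frac{d}{k}\binom{d-1}{k-1}\ge\frac{d}{k}$ in your final remark. One small caution: your first justification of $\binom{d}{k}\ge d/k$ passes through the claim that $\binom{d}{k}\ge d$ for $k\ge 2$, which is false at $d=k$ (e.g.\ $\binom{2}{2}=1<2$); only the second phrasing via $\binom{d-1}{k-1}\ge 1$ is sound, so that is the one to keep.
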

\begin{remark}
Inequality \eqref{eq:sumVarJ} is indeed a high-order extension of \eqref{eq:var-Fourier}:
\begin{equation*}
    \sum_{j=1}^n V_j(A) = \int_0^\infty2\sum_{j=1}^n \Inf_j[P_tA]\dif t = \Var(A) = W^{\ge 1}[A].
\end{equation*}
\end{remark}
\begin{proof}
Using the Fourier--Pauli formula~\eqref{eq:high-order-influence-variance-Fourier-formula}, we obtain
\begin{equation*}
    \sum_{\substack{J\subseteq[n]\\ |J|=k}}k\,V_J(A) =\sum_{\substack{J\subseteq[n]\\ |J|=k}}k\,\sum_{\substack{s\in \{0,1,2,3\}^n\\\supp(s)\supseteq J}}\frac{1}{|\supp(s)|}\,|\widehat A_s|^2 = \sum_{m=k}^n \frac{k}{m} \sum_{\substack{J\subseteq[n]\\ |J|=k}} \sum_{\substack{s\in \{0,1,2,3\}^n\\\supp(s)\supseteq J\\ |\supp(s)|=m}} |\widehat A_s|^2
\end{equation*}
Exchanging the order of summation and recalling the definition of Fourier weights~\eqref{eq:Fourier-weights} yield
\begin{equation*}
    \sum_{\substack{J\subseteq[n]\\ |J|=k}} \sum_{\substack{s\in \{0,1,2,3\}^n\\\supp(s)\supseteq J\\ |\supp(s)|=m}} |\widehat A_s|^2 = \sum_{\substack{s\in \{0,1,2,3\}^n\\ |\supp(s)|=m}}\ \sum_{\substack{J\subseteq\supp(s)\\ |J|=k}} |\widehat A_s|^2 = \sum_{\substack{s\in \{0,1,2,3\}^n\\ |\supp(s)|=m}} \binom{m}{k}\, |\widehat A_s|^2 = \binom{m}{k}\,W^{=m}[A].
\end{equation*}
Combining the identities, and using $\frac{k}{m}\binom{m}{k}=\binom{m-1}{k-1}\geq 1$, we obtain
\begin{equation*}
    \sum_{\substack{J\subseteq[n]\\ |J|=k}}k\,V_J(A) = \sum_{m=k}^n \frac{k}{m}\binom{m}{k}\,W^{=m}[A] \geq \sum_{m=k}^n W^{=m}[A] = W^{\ge k}[A]
\end{equation*}
as desired.
\end{proof}

\section{Main Results}\label{sec:proofs-main}

We now present our two principal theorems, which establish dimension-free Talagrand-type inequalities on the quantum Boolean cube with explicit computable constants: the energy--variance inequality Theorem~\ref{thm:energy-variance} and the high-order influence--variance inequality Theorem~\ref{thm:influence-var}. Both results arise from a unified variance-decay perspective:
\begin{enumerate}
    \item\label{item:step(1)} General implication for individual observables: every short-time variance decay along the depolarizing semigroup yields a corresponding Talagrand-type inequality.
    \item\label{item:step(2)} Estimating the rates of variance decay for individual observables via classical functional inequalities such as the Poincar\'{e} inequality and hypercontractivity.
\end{enumerate}
Taken together, these two steps provide a semigroup-based framework of broad applicability, unifying first-order and high-order phenomena under the principle of variance decay. In what follows, Section~\ref{sec:proof-thm11} establishes Theorem~\ref{thm:energy-variance} via Lemma~\ref{lem:quantum-energy-var-ineq} (Step~\ref{item:step(1)}) and Lemma~\ref{lem:quantum-energy-var-decay} (Step~\ref{item:step(2)}), while Section~\ref{sec:proof-thm12} develops Theorem~\ref{thm:influence-var} via Lemma~\ref{lem:quantum-high-order-influence-var-ineq} (Step~\ref{item:step(1)}) and Lemma~\ref{lem:quantum-high-order-influence-var-decay} (Step~\ref{item:step(2)}).

\subsection{Energy--variance inequality}\label{sec:proof-thm11}
In this section, we establish Theorem~\ref{thm:energy-variance}, which provides a dimension-free relation between energies and variances. Here, the inequality involves an amplification functional $\Rscr(A,q)$, which compares the variance against the $L^q$-deviation and the aggregated $q$-influences.
\begin{theorem}\label{thm:energy-variance}
For every $A\in M_2(\Cbb)^{\otimes n}$ and $1\leq q\leq p\leq 2$,
\begin{equation*}
    \left\|A\right\|_{\infty}^{2-p}\, \Ecal_p[A] \geq \Var(A) \, \max\left\{C^{\text{\rm\ref{thm:energy-variance}}}_{1}(p),C^{\text{\rm\ref{thm:energy-variance}}}_{2}(p)\, \Rscr(A,q)^{\frac{p}{2}}\right\}
\end{equation*}
holds with amplification functional
\begin{equation*}
    \Rscr(A,q) = \frac{q}{2(2-q)} \max\left\{\ln \left(\frac{\Var(A)}{\|A-\tau(A)\|_{q}^2}\right),\, \ln^{+}\left(\frac{\Var(A)}{\sum_{j} \Inf^{q}_{j}[A]^{2/q}}\right) \right\},
\end{equation*}
and explicit computable constants
\begin{equation*}
    C^{\text{\rm\ref{thm:energy-variance}}}_{1}(p)=\frac{\left(\frac{3}{4}\right)^{p-1}}{\Beta(\frac{3}{4};\frac{p}{2},\frac{p}{2})},\quad C^{\text{\rm\ref{thm:energy-variance}}}_{2}(p)= \frac{p}{2} \left(\frac{4}{3}\right)^{1-\frac{p}{2}}\max_{x\geq 0} \frac{1-e^{-x}}{x^{\frac{p}{2}}},
\end{equation*}
where $\Beta(x;a,b) := \int_0^x t^{a-1} \left(1-t\right)^{b-1} \dif t$ is the incomplete Beta function.
\end{theorem}
\begin{remark}
The constants satisfy $C^{\text{\rm\ref{thm:energy-variance}}}_{1}(p) \geq C^{\text{\rm\ref{thm:energy-variance}}}_{2}(p) \geq 0.368$, so the inequality is dimension-free and uniformly non-degenerate across $1 \leq p \leq 2$. At $p=1$, one has $C^{\text{\rm\ref{thm:energy-variance}}}_{1}(1)=\frac{3}{2\pi}$, recovering the $L^1$-Poincar\'{e} constant in Proposition \ref{prop:quantum-p-Poincare}. At $p=2$, $C^{\text{\rm\ref{thm:energy-variance}}}_{1}(2) = C^{\text{\rm\ref{thm:energy-variance}}}_{2}(2)=1$, and
\begin{equation*}
    \Var(A)\,\Rscr(A,q) \geq \frac{q}{2(2-q)} \Var(A) \ln \left(\frac{\Var(A)}{\|A-\tau(A)\|_{q}^2}\right) \stackrel{q\rightarrow 2}{\longrightarrow} \frac{1}{2} \,\Ent[|A-\tau(A)|^2],
\end{equation*}
so the inequality recovers both the Poincar\'e inequality \eqref{eq:Poincare-ineq} and the log-Sobolev inequality \eqref{eq:log-Sobolev-ineq}. 
\end{remark}

We now implement Step~\ref{item:step(1)}: for any fixed observable, every short-time variance decay along the depolarizing semigroup yields a corresponding energy--variance inequality. This general implication reduces the task of establishing Talagrand-type energy--variance inequalities to that of finding suitable short-time variance decay estimates. The following lemma makes this precise.

\begin{lemma}
\label{lem:quantum-energy-var-ineq}
Let $A\in M_2(\Cbb)^{\otimes n}$. If for some $\Rscr,\epsilon>0$, we have the short-time variance decay:
\begin{equation}\label{eq:var-decay-condition}
    \Var(P_{t} A) \leq e^{-2\Rscr t} \Var(A), \quad \forall\, t\in [0,\epsilon],
\end{equation}
then for all $p\in[1,2]$, the following energy--variance inequality holds:
\begin{equation}\label{ineq:main quantum improved Talagrand}
    \left\|A\right\|_{\infty}^{2-p}\, \Ecal_p[A] \geq  \left(\frac{3}{4}\right)^{p-1} \Var(A)\, \sup_{t\in (0,\epsilon]} \frac{1-e^{-2\Rscr t}}{ \Beta\left(\frac{3}{4}\left(1-e^{-2t}\right);\frac{p}{2},\frac{p}{2}\right)}.
\end{equation}
\end{lemma}

The proof strategy is to bound $\Inf[P_sA]$ in terms of the $p$-energy $\Ecal_p[A]$ and the $L^\infty$-norm $\|A\|_\infty$, using the derivative decay (Lemma~\ref{lem:j-derivative-decay}) and the Lipschitz smoothing (Proposition~\ref{prop:Lipschitz-smoothing}) estimates. Together with the integral representation of variance \eqref{eq:var-influence-representation} in terms of $\Inf[P_sA]$, the assumed short-time variance decay then yields directly the desired energy--variance relation.

\begin{proof}
Fix $A\in M_2(\Cbb)^{\otimes n}$. We begin by establishing the bound that for all $s> 0$ and $p\in [1,2]$,
\begin{equation}\label{eq:influence-bound}
    \Inf[P_s A] \leq e^{-2(p-1) s} G^{\text{\rm\ref{prop:Lipschitz-smoothing}}}_{1}(2s)^{-(1-\frac{p}{2})}  \left\|A\right\|_{\infty}^{2-p}  \Ecal_p[A].
\end{equation}
Note that $P_s$ is tracially symmetric and commutes with $d_j$'s. Hence
\begin{align*}
    \Inf[P_sA] &=\sum_{j=1}^n \left\|d_jP_sA\right\|_2^2 =\sum_{j=1}^n \tup{d_jA, d_j P_{2s} A} =\tau\Bigl(\sum_{j=1}^n (d_jA)^\ast (d_jP_{2s}A)\Bigr),
\end{align*}
Applying the noncommutative vector-valued H\"older inequality (Lemma~\ref{lem:nc-holder}) yields
\begin{equation*}
    \left|\tau\Bigl(\sum_{j=1}^n(d_jA)^*(d_jP_{2s}A)\Bigr)\right|\le \left\|\Bigl(\sum_{j=1}^n |d_j A|^2\Bigr)^{1/2}\right\|_p \left\|\Bigl(\sum_{j=1}^n |d_jP_{2s} A|^2\Bigr)^{1/2}\right\|_{p'} = \left\||\nabla A|\right\|_{p} \left\||\nabla P_{2s} A|\right\|_{p^{\prime}}.
\end{equation*}
where $p^{\prime}=\frac{p}{p-1}\in[2,\infty]$ is the H\"{o}lder conjugate of $p$. By the log-convexity of the normalized Schatten norm (Lemma~\ref{lem:holder-interp-Schatten}),
\begin{align*}
    &\left\||\nabla P_{2s} A|\right\|_{p^{\prime}} \leq \left\||\nabla P_{2s} A|\right\|_{2}^{\frac{2}{p^{\prime}}} \left\||\nabla P_{2s} A|\right\|_{\infty}^{\frac{p^{\prime}-2}{p^{\prime}}}= \left\||\nabla P_{2s} A|\right\|_{2}^{2\frac{p-1}{p}} \left\||\nabla P_{2s} A|\right\|_{\infty}^{\frac{2-p}{p}}.
\end{align*}
\begin{itemize}
    \item For the $L^2$-term, Lemma \ref{lem:j-derivative-decay} applied to $P_s A$ yields
    \begin{equation*}
        \||\nabla P_{2s}A|\|_2^2 = \sum_{j=1}^n\|d_j P_s (P_s A) \|_2^2 \le e^{-2s}\sum_{j=1}^n\|d_j(P_sA)\|_2^2 = e^{-2s}\Inf[P_sA].
    \end{equation*}
    \item For the $L^\infty$-term, Proposition~\ref{prop:Lipschitz-smoothing} gives
    \begin{equation*}
        \left\||\nabla P_{2s} A|\right\|_{\infty} \leq G^{\text{\rm\ref{prop:Lipschitz-smoothing}}}_{1}(2s)^{-1/2} \|A\|_{\infty}.
    \end{equation*}
\end{itemize}
Combining these bounds gives
\begin{equation*}
    \Inf[P_s A] \leq \left\||\nabla A|\right\|_{p} \left(e^{-2s} \Inf[P_s A]\right)^{\frac{p-1}{p}} \left(G^{\text{\rm\ref{prop:Lipschitz-smoothing}}}_{1}(2s)^{-1/2}\|A\|_{\infty}\right)^{\frac{2-p}{p}}.
\end{equation*}
If $\Inf[P_sA]=0$ the bound is trivial. Otherwise, recalling $\Ecal_p[A]=\|\nabla A\|_p^p$, raising both sides to the power $p$ and canceling $\Inf[P_sA]^{p-1}$ yields the desired bound
\begin{equation*}
    \Inf[P_s A] \leq e^{-2(p-1) s} G^{\text{\rm\ref{prop:Lipschitz-smoothing}}}_{1}(2s)^{-(1-\frac{p}{2})}  \left\|A\right\|_{\infty}^{2-p}  \Ecal_p[A].
\end{equation*}
Now, for every $t\in(0,\varepsilon]$, combining together the variance-decay assumption \eqref{eq:var-decay-condition}, the variance representation  \eqref{eq:var-influence-representation}, and the bound \eqref{eq:influence-bound}, we obtain
\begin{align*}
    \left(1-e^{-2\Rscr t}\right)\Var(A) &\leq \Var(A) - \Var(P_{t} A) =\int_{0}^{t} 2\Inf[P_s A] \dif s \\
    &\leq  \left\|A\right\|_{\infty}^{2-p} \Ecal_p[A] \int_{0}^{t} 2 e^{-2(p-1) s} G^{\text{\rm\ref{prop:Lipschitz-smoothing}}}_{1}(2s)^{-(1-\frac{p}{2})} \dif s.
\end{align*}
Recalling $G^{\text{\rm\ref{prop:Lipschitz-smoothing}}}_1(2s)=\frac13(e^{2s}+3)(e^{2s}-1)$, the substitution $r=\frac{3}{4}(1-e^{-2s})$ gives
\begin{align*}
    &\int_{0}^{t} 2e^{-2(p-1) s} G^{\text{\rm\ref{prop:Lipschitz-smoothing}}}_{1}(2s)^{-(1-\frac{p}{2})} \dif s = \int_{0}^{t} 2e^{-2(p-1) s} \left[\frac{1}{3}(e^{2s}+3)(e^{2s}-1)\right]^{-(1-\frac{p}{2})} \dif s \\
    &\qquad= \left(\frac{4}{3}\right)^{p-1} \int_{0}^{\frac{3}{4}(1-e^{-2t})} r^{\frac{p}{2}-1} \left(1-r\right)^{\frac{p}{2}-1} \dif r =\left(\frac{4}{3}\right)^{p-1} \Beta\left(\frac{3}{4}\left(1-e^{-2t}\right);\frac{p}{2},\frac{p}{2}\right).
\end{align*}
Taking the supremum over $t\in(0,\varepsilon]$ yields the claimed inequality.
\end{proof}

Next, we proceed to Step \ref{item:step(2)}: for individual observables, we derive suitable estimates of the rate $\Rscr$ governing the $\varepsilon$-short-time variance decay \eqref{eq:var-decay-condition}.
\begin{lemma}
\label{lem:quantum-energy-var-decay}
Let $A\in M_2(\Cbb)^{\otimes n}$. Then the short-time variance decay \eqref{eq:var-decay-condition} holds with
\begin{itemize}
    \item $\Rscr=1$ for every $\varepsilon>0$;
    \item $\Rscr = \frac{\tanh(\varepsilon)}{\varepsilon}\Rscr(A,q)$ for every $0<\varepsilon \leq \arctanh(\frac{2-q}{q})$ and $q\in [1,2)$.
\end{itemize}
\end{lemma}

The first statement follows immediately from the Poincar\'e inequality \eqref{eq:Poincare-var-decay}. For the second statement, the key step is to establish the endpoint estimate via hypercontractivity \eqref{eq:hypercontractivity} applied to the variance representation
\begin{equation*}
    \Var(P_\varepsilon A) = \left\|P_\varepsilon (A - \tau(A))\right\|_2^2 = \sum_{j=1}^{n} \left\|P_{\varepsilon} \tau_1 \tau_2\cdots \tau_{j-1} (d_j A)\right\|_2^2.
\end{equation*}
Together with the log-convexity of $t\mapsto \Var(P_t A)$, the endpoint estimate extends to the desired short-time variance decay estimate \eqref{eq:var-decay-condition}. The details are given below.


\begin{proof}
Let $A\in M_2(\Cbb)^{\otimes n}$. The first statement is a direct consequence of the Poincar\'{e} inequality \eqref{eq:Poincare-var-decay}, which gives the uniform decay rate $\Rscr=1$ for every $\varepsilon>0$. For the second statement, let $q\in [1,2)$ and $0<\varepsilon \leq \arctanh(\frac{2-q}{q})$. It suffices to verify the endpoint estimate
\begin{equation}\label{ineq:endpoint-var-decay}
    \Var(P_\varepsilon A) \leq e^{-2\Rscr \varepsilon}\Var(A).
\end{equation}
Note that $t\mapsto \Var(P_t A)$ is log-convex as a mixture of exponential functions:
\begin{equation*}
    \Var(P_t A) = \sum_{\substack{s\in\{0,1,2,3\}^n\\ s\neq 0}} e^{-2t|\supp(s)|} |\widehat{A}_s|^2.
\end{equation*}
If \eqref{ineq:endpoint-var-decay} holds, then the log-convexity of $t\mapsto \Var(P_t A)$ yields that for every $t\in [0,\varepsilon]$,
\begin{equation*}
    \Var(P_t A) \leq \Var(P_0 A)^{1-\frac{t}{\varepsilon}}\Var(P_{\varepsilon}A)^{\frac{t}{\varepsilon}} = \Var(A)^{1-\frac{t}{\varepsilon}}\Var(P_{\varepsilon}A)^{\frac{t}{\varepsilon}}\le e^{-2\Rscr t}\Var(A),
\end{equation*}
which establishes the desired short-time variance decay.

Recall that $\Rscr = \frac{\tanh(\varepsilon)}{\varepsilon}\Rscr(A,q)$, where
\begin{equation*}
    \Rscr(A,q) = \frac{q}{2(2-q)} \max\left\{\ln \left(\frac{\Var(A)}{\|A-\tau(A)\|_{q}^2}\right),\, \ln^{+}\left(\frac{\Var(A)}{\sum_{j} \Inf^{q}_{j}[A]^{2/q}}\right) \right\},
\end{equation*}
Since $\left\|A-\tau(A) \right\|_{q}^2\leq \Var(A)$, it follows that
\begin{align*}
    e^{-2\Rscr \varepsilon} &= \left(\min\left\{\frac{\left\|A-\tau(A) \right\|_{q}^2}{\Var(A)},\ \min\left\{\frac{\sum_{j=1}^{n}  \Inf^{q}_{j}[A]^{2/q}}{\Var(A)},\ 1 \right\}\right\}\right)^{\vartheta}\\
    &= \Var(A)^{-\vartheta} \Bigg(\min\bigg\{\left\|A-\tau(A) \right\|_{q}^2,\ \sum_{j=1}^{n}  \left\| d_j A\right\|_q^2\bigg\}\Bigg)^{\vartheta},
\end{align*}
where $\vartheta:=\frac{q}{2-q}\,\tanh(\varepsilon) \in (0,1]$. Hence, verifying \eqref{ineq:endpoint-var-decay} reduces to showing
\begin{equation}\label{ineq:quantum-variance-bound-rewrite}
    \Var(P_\varepsilon A) \leq \Var(A)^{1-\vartheta} \Bigg(\min\bigg\{\left\|A-\tau(A) \right\|_{q}^2,\ \sum_{j=1}^{n}  \left\| d_j A\right\|_q^2\bigg\}\Bigg)^{\vartheta}.
\end{equation}
\begin{itemize}
    \item First, we establish the bound
    \begin{equation*}
        \Var(P_\varepsilon A) \leq \Var(A)^{1-\vartheta} \left\|A-\tau(A) \right\|_{q}^{2\vartheta}.
    \end{equation*}
    Since $P_\varepsilon$ commutes with $\tau$,
    \begin{equation*}
        \Var(P_\varepsilon A) = \left\|P_\varepsilon A - \tau(P_\varepsilon A)\right\|_2^2 = \left\|P_\varepsilon (A - \tau(A))\right\|_2^2.
    \end{equation*}
    By hypercontractivity \eqref{eq:hypercontractivity},
    \begin{equation*}
        \left\|P_{\varepsilon} (A - \tau(A)) \right\|_{2} \leq \left\|A-\tau(A) \right\|_{1+e^{-2\varepsilon}}.
    \end{equation*}
    Since $\frac{1}{1+e^{-2\varepsilon}}=\frac{1-\vartheta}{2}+\frac{\vartheta}{q}$, log-convexity of the norm (Lemma~\ref{lem:holder-interp-Schatten}) yields
    \begin{align*}
        \left\|A-\tau(A) \right\|_{1+e^{-2\varepsilon}} \leq \left\|A-\tau(A) \right\|_{2}^{1-\vartheta} \left\|A-\tau(A) \right\|_{q}^{\vartheta}.
    \end{align*}
    Rounding up, we obtain
    \begin{equation*}
        \Var(P_\varepsilon A) \leq \left(\left\|A-\tau(A) \right\|_{2}^{1-\vartheta} \left\|A-\tau(A) \right\|_{q}^{\vartheta}\right)^{2} = \Var(A)^{1-\vartheta} \left\|A-\tau(A) \right\|_{q}^{2\vartheta}
    \end{equation*}
    as desired.

    \item Next, we establish the bound
    \begin{equation*}
        \Var(P_\varepsilon A) \leq  \Var(A)^{1-\vartheta} \bigg(\sum_{j=1}^{n}  \left\| d_j A\right\|_q^2\bigg)^{\vartheta}.
    \end{equation*}
    Denote $\tau_{<j} = \tau_1\tau_2\cdots \tau_{j-1}$. The standard conditional variance decomposition gives
    \begin{equation}\label{eq:var-martingale-decomposition}
        \Var(A) = \|A\|_2^2 - |\tau(A)|^2 = \sum_{j=1}^{n} \left(\left\|\tau_{<j} (A)\right\|_2^2- \left\|\tau_{<j}\tau_j (A)\right\|_2^2\right) = \sum_{j=1}^{n} \left\|\tau_{<j} (d_j A)\right\|_2^2.
    \end{equation}
    Since $P_\varepsilon$ commutes with $\tau_{<j}$ and $d_j$,
    \begin{equation*}
        \Var(P_{\varepsilon} A) = \sum_{j=1}^{n} \left\|\tau_{<j} (d_j P_{\varepsilon} A)\right\|_2^2 = \sum_{j=1}^{n} \left\|P_{\varepsilon} \tau_{<j} (d_j A)\right\|_2^2.
    \end{equation*}
    Applying hypercontractivity and norm log-convexity as above,
    \begin{equation*}
        \left\|P_{\varepsilon} \tau_{<j} (d_j A)\right\|_2 \leq \left\|\tau_{<j} (d_j A)\right\|_{1+e^{-2\varepsilon}}\le \left\|\tau_{<j} (d_j A)\right\|_{2}^{1-\vartheta} \left\|\tau_{<j} (d_j A)\right\|_{q}^{\vartheta}.
    \end{equation*}
    By the H\"older inequality in the sum over $j$,
    \begin{equation*}
        \sum_{j=1}^{n} \left(\left\|\tau_{<j} (d_j A)\right\|_{2}^{1-\vartheta} \left\|\tau_{<j} (d_j A)\right\|_{q}^{\vartheta}\right)^2 \leq \bigg(\sum_{j=1}^{n} \left\|\tau_{<j} (d_j A)\right\|_{2}^2 \bigg)^{1-\vartheta} \bigg(\sum_{j=1}^{n} \left\|\tau_{<j} (d_j A)\right\|_{q}^2 \bigg)^{\vartheta}.
    \end{equation*}
    The first term reproduces $\Var(A)$ by \eqref{eq:var-martingale-decomposition}, while the $L^q$-contractivity of $\tau_{<j}$ controls the second term via $\|\tau_{<j}(d_jA)\|_q\leq\|d_jA\|_q$. Rounding up, we obtain
    \begin{equation*}
        \Var(P_{\varepsilon} A) \leq \Var(A)^{1-\vartheta}\bigg(\sum_{j=1}^{n} \left\|d_j A\right\|_{q}^2 \bigg)^{\vartheta}
    \end{equation*}
    as desired.
\end{itemize}
Taking the stronger of the two bounds yields \eqref{ineq:quantum-variance-bound-rewrite}, which completes the proof.
\end{proof}

Finally, combining the general implication (Lemma~\ref{lem:quantum-energy-var-ineq}) with the short-time variance decay estimates (Lemma~\ref{lem:quantum-energy-var-decay}), we complete the proof of Theorem~\ref{thm:energy-variance}:
\begin{proof}[Proof of Theorem~\ref{thm:energy-variance}]
Let $A\in M_2(\Cbb)^{\otimes n}$ and $1\leq q\leq p\leq 2$.
\begin{itemize}
    \item First, we prove the bound
    \begin{equation*}
        \left\|A\right\|_{\infty}^{2-p}\, \Ecal_p[A] \geq C^{\text{\rm\ref{thm:energy-variance}}}_{1}(p)\, \Var(A).
    \end{equation*}
    Taking $\Rscr=1$ with $\varepsilon=\infty$ in \eqref{ineq:main quantum improved Talagrand} yields
    \begin{align*}
        \left\|A\right\|_{\infty}^{2-p}\, \Ecal_p[A] &\geq \left(\frac{3}{4}\right)^{p-1} \Var(A)\, \sup_{t>0} \frac{1-e^{-2 t}}{ \Beta\left(\frac{3}{4}\left(1-e^{-2t}\right);\frac{p}{2},\frac{p}{2}\right)}\\
        &\geq \left(\frac{3}{4}\right)^{p-1} \Var(A)\, \lim_{t\rightarrow\infty} \frac{1-e^{-2 t}}{ \Beta\left(\frac{3}{4}\left(1-e^{-2t}\right);\frac{p}{2},\frac{p}{2}\right)} = C^{\text{\rm\ref{thm:energy-variance}}}_{1}(p)\, \Var(A)
    \end{align*}
    with $C^{\text{\rm\ref{thm:energy-variance}}}_{1}(p)=\left(\frac{3}{4}\right)^{p-1}/\Beta\left(\frac{3}{4};\frac{p}{2},\frac{p}{2}\right)$ as desired.
    \item Next, we prove the bound
    \begin{equation*}
        \left\|A\right\|_{\infty}^{2-p}\, \Ecal_p[A] \geq C^{\text{\rm\ref{thm:energy-variance}}}_{2}(p)\, \Var(A)\, \Rscr(A,q)^{\frac{p}{2}}.
    \end{equation*}
    Optimizing \eqref{ineq:main quantum improved Talagrand} with $\Rscr = \max\{1,\frac{\tanh(\varepsilon)}{\varepsilon}\Rscr(A,q)\}$ over $0<\varepsilon \leq \arctanh(\frac{2-q}{q})$ gives
    \begin{equation*}
        \left\|A\right\|_{\infty}^{2-p}\, \Ecal_p[A] \geq  \left(\frac{3}{4}\right)^{p-1} \Var(A)\, \sup_{0<\varepsilon \leq \arctanh(\frac{2-q}{q})}\sup_{t\in (0,\epsilon]} \frac{1-e^{-2t \max\{1,\frac{\tanh(\varepsilon)}{\varepsilon}\Rscr(A,q)\}}}{ \Beta\left(\frac{3}{4}\left(1-e^{-2t}\right);\frac{p}{2},\frac{p}{2}\right)}.
    \end{equation*}
    Since $\max\{1,\frac{\tanh(\varepsilon)}{\varepsilon}\Rscr(A,q)\} \geq \frac{\tanh(\varepsilon)}{\varepsilon} \max\{1,\Rscr(A,q)\}$, we have
    \begin{equation*}
        \sup_{t\in (0,\epsilon]} \frac{1-e^{-2t \max\{1,\frac{\tanh(\varepsilon)}{\varepsilon}\Rscr(A,q)\}}}{ \Beta\left(\frac{3}{4}\left(1-e^{-2t}\right);\frac{p}{2},\frac{p}{2}\right)} \geq \frac{1-e^{-2\tanh(\varepsilon) \max\{1,\Rscr(A,q)\}}}{ \Beta\left(\frac{3}{4}\left(1-e^{-2\varepsilon}\right);\frac{p}{2},\frac{p}{2}\right)}.
    \end{equation*}
    Substituting $r=\tanh(\varepsilon)$ (equivalently, $e^{-2\varepsilon} = \frac{1-r}{1+r}$) and applying Lemma~\ref{lem:beta_upper}, we have
    \begin{equation*}
        \frac{1-e^{-2\tanh(\varepsilon) \max\{1,\Rscr(A,q)\}}}{ \Beta\left(\frac{3}{4}\left(1-e^{-2\varepsilon}\right);\frac{p}{2},\frac{p}{2}\right)} = \frac{1-e^{-2r\max\{1,\Rscr(A,q)\}}}{\Beta\left(\frac{3}{4}\frac{2r}{1+r};\frac{p}{2},\frac{p}{2}\right)} \geq \frac{p}{2} \left(\frac{2}{3}\right)^{\frac{p}{2}} \frac{1-e^{-2r \max\{1,\Rscr(A,q)\}}}{r^{\frac{p}{2}}}
    \end{equation*}
    Setting $\tilde{r}=2\max\{1,\Rscr(A,q)\}\cdot r$ yields
    \begin{equation*}
        \frac{1-e^{-2r \max\{1,\Rscr(A,q)\}}}{r^{\frac{p}{2}}} = \frac{1-e^{-\tilde{r}}}{\tilde{r}^{\frac{p}{2}}} \left(2\max\{1,\Rscr(A,q)\}\right)^{\frac{p}{2}} \geq 2^{\frac{p}{2}} \frac{1-e^{-\tilde{r}}}{\tilde{r}^{\frac{p}{2}}} \Rscr(A,q)^{\frac{p}{2}}.
    \end{equation*}
    Note that $\tilde{r}$ varies over $0<\tilde{r}\leq \frac{2(2-q)}{q} \max\{1,\Rscr(A,q)\}$, and for $q\leq p$ we have
    \begin{equation*}
        \frac{2(2-q)}{q} \max\{1,\Rscr(A,q)\} \geq \frac{2(2-q)}{q} \geq \frac{2(2-p)}{p}.
    \end{equation*}
    Lemma~\ref{lem:phi_max_location} ensures that optimizing $\tfrac{1-e^{-\tilde{r}}}{\tilde{r}^{p/2}}$ over this admissible range yields $\max_{x\geq 0}\tfrac{1-e^{-x}}{x^{p/2}}$. Combining the estimates, we obtain
    \begin{equation*}
        \left\|A\right\|_{\infty}^{2-p}\, \Ecal_p[A] \geq  C^{\text{\rm\ref{thm:energy-variance}}}_{2}(p)\, \Var(A)\, \Rscr(A,q)^{\frac{p}{2}}
    \end{equation*}
    with $C^{\text{\rm\ref{thm:energy-variance}}}_{2}(p) = \frac{p}{2} \left(\frac{4}{3}\right)^{1-\frac{p}{2}}\max_{x\geq 0} \frac{1-e^{-x}}{x^{p/2}}$ as desired.
\end{itemize}
Comparing the two bounds and taking the stronger one completes the proof.
\end{proof}

\subsection{High-order influence--variance inequality}\label{sec:proof-thm12}



In this section, we establish Theorem~\ref{thm:influence-var}, which provides a dimension-free relation between high-order influences and partial variances (the bound depends only on the order $k=|J|$, not on the ambient dimension $n$).

\begin{theorem}
\label{thm:influence-var}
For every $A\in M_2(\Cbb)^{\otimes n}$, $J\subseteq [n]$ with $|J|=k\ge 1$, $p\in [1,2]$, and $q\in [1,2)$,
\begin{equation*}
    \left\|A\right\|_{\infty}^{2-p}\, \Inf_J^p[A] \geq  3^{-k(1-\frac{p}{2})} \, V_J(A)\, \max\left\{k,\,\frac{q}{2(2-q)} \ln^{+} \left(\frac{k\, V_J(A)}{\Inf_J^q[A]^{2/q}}\right)\right\}.
\end{equation*}
\end{theorem}

The proof follows the same two-step strategy as in Theorem~\ref{thm:energy-variance}, now with energies replaced by influences and global variances replaced by partial variances.

We now implement Step~\ref{item:step(1)}: for any fixed observable and subset of coordinates, every short-time decay of the partial variance along the depolarizing semigroup leads to a corresponding influence--variance inequality. This general implication reduces the task of proving Talagrand-type bounds in the high-order setting to establishing suitable short-time decay estimates for partial variances. The following lemma makes this precise.

\begin{lemma}
\label{lem:quantum-high-order-influence-var-ineq}
Let $A\in M_2(\Cbb)^{\otimes n}$ and $J\subseteq[n]$ with $|J|=k\ge 1$. If for some $\Rscr,\epsilon>0$, we have the short-time decay of the $J$-partial variance:
\begin{equation}\label{ineq:quantum-high-order-variance-decay}
    V_J(P_t A) \leq e^{-2\Rscr t} V_J(A), \quad \forall\, t\in [0,\epsilon],
\end{equation}
then for all $p\in [1,2]$, the following influence--variance inequality holds:
\begin{equation}\label{ineq:quantum-high-order-influence-var}
    \left\|A\right\|_{\infty}^{2-p}\, \Inf_J^p[A] \geq 3^{-k(1-\frac{p}{2})}\, V_J(A)\,  \Rscr.
\end{equation}
\end{lemma}

The proof strategy parallels that of Lemma~\ref{lem:quantum-energy-var-ineq}, but with the roles of energies and variances replaced by influences and partial variances. Specifically, we bound $\Inf_J[P_sA]$ in terms of the $p$-influence $\Inf_J^p[A]$ and the $L^\infty$-norm $\|A\|_\infty$, using the derivative decay (Lemma~\ref{lem:j-derivative-decay}) and the operator norm (Proposition~\ref{prop:derivative-infty-norm}) estimates. Together with definition of the $J$-partial variance \eqref{eq:def-local-variance} in terms of $\Inf_J[P_sA]$, the assumed short-time partial variance decay then yields directly the desired high-order influence--variance inequality.

\begin{proof}
Fix $A\in M_2(\Cbb)^{\otimes n}$ and $J\subseteq[n]$ with $|J|=k\ge 1$. We begin by establishing the bound that for all $s>0$ and $p\in [1,2]$,
\begin{equation}\label{eq:J-influence-bound}
    \Inf_J[P_s A] \leq 3^{k(1-\frac{p}{2})} e^{-2ks} \left\|A\right\|_{\infty}^{2-p}  \Inf_J^p[A].
\end{equation}
Note that $P_s$ is tracially symmetric and commutes with $d_J$. Hence
\begin{align*}
    \Inf_J[P_s A] =\left\|d_J P_s A\right\|_2^2 = \tup{d_J A, d_J P_{2s} A}.
\end{align*}
Applying the noncommutative H\"older inequality (Lemma~\ref{lem:nc-holder}) yields
\begin{equation*}
    \left|\tup{d_J A, d_J P_{2s} A}\right| \leq \left\|d_J A\right\|_p \left\|d_J P_{2s} A\right\|_{p^\prime},
\end{equation*}
where $p^{\prime}=\frac{p}{p-1}\in[2,\infty]$ is the H\"{o}lder conjugate of $p$. By the log-convexity of the normalized Schatten norm  (Lemma~\ref{lem:holder-interp-Schatten}),
\begin{equation*}
    \left\|d_J P_{2s} A\right\|_{p^{\prime}} \leq \left\|d_J P_{2s} A\right\|_{2}^{\frac{2}{p^{\prime}}} \left\|d_J P_{2s} A\right\|_{\infty}^{\frac{p^{\prime}-2}{p^{\prime}}}= \left\|d_J P_{2s} A\right\|_{2}^{2\frac{p-1}{p}} \left\|d_J P_{2s} A\right\|_{\infty}^{\frac{2-p}{p}}.
\end{equation*}
\begin{itemize}
    \item For the $L^2$-term, Lemma \ref{lem:j-derivative-decay} applied to $P_s A$ yields
    \begin{equation*}
        \|d_J P_{2s} A\|_2^2 = \|d_J P_s (P_s A) \|_2^2 \le e^{-2ks}\|d_J(P_sA)\|_2^2 = e^{-2ks}\Inf_J[P_sA].
    \end{equation*}
    \item For the $L^\infty$-term, combining Lemma~\ref{lem:j-derivative-decay} and Proposition \ref{prop:derivative-infty-norm} gives
    \begin{equation*}
        \left\|d_J P_{2s} A\right\|_{\infty} \leq e^{-2ks} \left\|d_J A\right\|_{\infty} \leq \sqrt{3}^k e^{-2ks}  \left\|A\right\|_{\infty}.
    \end{equation*}
\end{itemize}
Combining these bounds gives
\begin{equation*}
    \Inf_J[P_s A] \leq \left\|d_J A\right\|_p \left(e^{-2ks} \Inf_J[P_s A]\right)^{\frac{p-1}{p}} \left(\sqrt{3}^k e^{-2ks}\|A\|_{\infty}\right)^{\frac{2-p}{p}}.
\end{equation*}
If $\Inf_J[P_sA]=0$ the bound is trivial. Otherwise, recalling $\Inf_J^p[A]=\left\|d_J A\right\|_{p}^{p}$, raising both sides to the power $p$ and canceling $\Inf_J[P_sA]^{p-1}$ yields the desired bound
\begin{align*}
    \Inf_J[P_s A] \leq 3^{k(1-\frac{p}{2})} e^{-2ks} \left\|A\right\|_{\infty}^{2-p}  \Inf_J^p[A].
\end{align*}
Now, for every $t\in(0,\varepsilon]$, combining together the partial variance decay assumption \eqref{ineq:quantum-high-order-variance-decay}, the definition of partial variance \eqref{eq:def-local-variance}, and the bound \eqref{eq:J-influence-bound}, we obtain
\begin{align*}
    \left(1-e^{-2\Rscr t}\right)V_J(A) &\leq V_J(A)- V_J(P_t A) = \int_{0}^{t} 2\Inf_J[P_s A]  \dif s\\
    &\leq 3^{k(1-\frac{p}{2})}\,\frac{1-e^{-2 k t}}{k}\,\left\|A\right\|_{\infty}^{2-p}  \Inf_J^p[A].
\end{align*}
Taking supremum over $t\in(0,\varepsilon]$ yields
\begin{equation*}
    \left\|A\right\|_{\infty}^{2-p}  \Inf_J^p[A] \geq 3^{-k(1-\frac{p}{2})}\, V_J(A)\,  \sup_{t\in (0,\epsilon]} k\cdot \frac{1-e^{-2\Rscr t}}{1-e^{-2k t}} = 3^{-k(1-\frac{p}{2})}\, V_J(A)\,\Rscr,
\end{equation*}
which completes the proof.
\end{proof}
\begin{remark}
Lemma~\ref{lem:quantum-high-order-influence-var-ineq} also admits a simpler argument. Differentiating \eqref{ineq:quantum-high-order-variance-decay} at $t=0$ yields
\begin{equation*}
    \Inf_J[A] \geq V_J[A]\, \Rscr.
\end{equation*}
On the other hand, the operator norm estimate (Proposition \ref{prop:derivative-infty-norm}) gives
\begin{equation*}
    \Inf_J[A] = \left\|d_J A\right\|_2^2 \leq \left\|d_J A\right\|_\infty^{2-p} \left\|d_J A\right\|_p^p \leq 3^{k(1-\frac{p}{2})}  \left\|A\right\|_\infty^{2-p} \Inf_J^p[A].
\end{equation*}
Combining the two bounds completes the proof. When $p=2$, the dependence on the order $k$ is eliminated; when $p=1$, it can also be eliminated via the improved estimate
\begin{equation*}
    \Inf_J[A] = \left\|d_J A\right\|_2^2 = \tup{A, d_J A} \leq \left\|A\right\|_\infty  \left\|d_J A\right\|_1 = \left\|A\right\|_\infty \Inf_J^1[A].
\end{equation*}
For intermediate values of $p$, while the order-dependence cannot be eliminated for $d_J$ to our best knowledge, considering other choices of high-order derivative, such as the conditional averaged gradient $|\nabla^{0}_J A|:=\tau_J(|d_J A|^2)^{1/2}$, may yield order-independent estimates.
%
In the classical case, the bound $\left\|D_J\right\|_{\infty\to \infty}\leq 1$ immediately yields order-independent estimates.
\end{remark}

Next, we proceed to Step~\ref{item:step(2)}: for individual observables and subsets of coordinates, we derive suitable estimates of the rate $\Rscr$ governing the $\varepsilon$-short-time partial variance decay \eqref{ineq:quantum-high-order-variance-decay}.
\begin{lemma}
\label{lem:quantum-high-order-influence-var-decay}
Let $A\in M_2(\Cbb)^{\otimes n}$ and $J\subseteq[n]$ with $|J|=k\ge 1$. Then the short-time decay of the $J$-partial variance  \eqref{ineq:quantum-high-order-variance-decay} holds with
\begin{itemize}
    \item $\Rscr=k$ for every $\varepsilon>0$;
    \item $\Rscr = \frac{\tanh(\varepsilon)}{\varepsilon} \frac{q}{2(2-q)}\ln^+\left(\frac{k V_J(A)}{\Inf_J^q[A]^{2/q}}\right)$ for every $0<\varepsilon \leq \arctanh(\frac{2-q}{q})$ and $q\in [1,2)$.
\end{itemize}
\end{lemma}


The proof strategy mirrors that of Lemma~\ref{lem:quantum-energy-var-decay}. The first statement follows immediately from the restricted Poincar\'e inequality \eqref{eq:restricted-Poincare-decay}. For the second statement, since $t\mapsto V_J(P_t A)$ is also log-convex, a similar endpoint estimate via hypercontractivity yields the desired short-time partial variance decay  \eqref{ineq:quantum-high-order-variance-decay} with the stated rate. The details are given below.

\begin{proof}
Let $A\in M_2(\Cbb)^{\otimes n}$ and $J\subseteq[n]$ with $|J|=k\ge 1$. The first statement is an immediate consequence of the restricted Poincar\'e inequality \eqref{eq:restricted-Poincare-decay}, which yields the uniform decay rate $\Rscr=k$ for every $\varepsilon>0$. For the second statement, let $q\in [1,2)$ and $0<\varepsilon \leq \arctanh(\frac{2-q}{q})$. Note that $t\mapsto V_J(P_t A)$ is also log-convex as a mixing of exponential functions:
\begin{equation*}
    V_J(P_t A) = \sum_{\substack{s\in\{0,1,2,3\}^n\\ \supp(s) \supseteq J}} e^{-2t|\supp(s)|}\, \frac1{|\supp(s)|}\, |\widehat{A}_s|^2.
\end{equation*}
It suffices to verify the endpoint estimate
\begin{equation}\label{ineq:endpoint-J-var-decay}
    V_J(P_\varepsilon A) \leq e^{-2\Rscr \varepsilon}V_J(A).
\end{equation}
With $\Rscr = \frac{\tanh(\varepsilon)}{\varepsilon} \frac{q}{2(2-q)}\ln^+\left(\frac{k V_J(A)}{\Inf_J^q[A]^{2/q}}\right)$ as defined,
we have
\begin{align*}
    e^{-2\Rscr \varepsilon} &= \left(\min\left\{\frac{\Inf^{q}_{J}[A]^{2/q}}{k\, V_J(A)},\ 1 \right\}\right)^{\vartheta} = V_J(A)^{-\vartheta} \left(\min\left\{k^{-1}\left\| d_J A\right\|_q^2,\ V_J(A)\right\}\right)^{\vartheta},
\end{align*}
where $\vartheta:=\frac{q}{2-q}\,\tanh(\varepsilon) \in (0,1]$. As $V_J(P_\varepsilon A)\leq V_J(A)$ holds trivially, it remains to show
\begin{equation}\label{ineq:J-quantum-variance-bound_rewrite}
    V_J(P_\varepsilon A)  \leq V_J(A)^{1-\vartheta} \left(k^{-1} \left\|d_J A\right\|_q^2\right)^{\vartheta}.
\end{equation}
Since $P_\varepsilon$ commutes with $d_J$,
\begin{equation*}
    V_J(P_\varepsilon A) = \int_0^\infty 2 \left\|d_J P_{\varepsilon+t} A\right\|_2^2 \dif t = \int_0^\infty 2 \left\|P_\varepsilon (d_J P_{t} A)\right\|_2^2 \dif t.
\end{equation*}
Applying hypercontractivity \eqref{eq:hypercontractivity} and norm log-convexity (Lemma~\ref{lem:holder-interp-Schatten}) yields
\begin{equation*}
    \left\|P_\varepsilon (d_J P_{t} A)\right\|_2 \leq \left\|d_J P_{t} A\right\|_{1+e^{-2\varepsilon}} \leq \left\|d_J P_{t} A\right\|_{2}^{1-\vartheta} \left\|d_J P_{t} A\right\|_{q}^{\vartheta}.
\end{equation*}
By the H\"older inequality in the integration over $t$,
\begin{equation*}
      \int_0^\infty 2 \left(\left\|d_J P_{t} A\right\|_{2}^{1-\vartheta} \left\|d_J P_{t} A\right\|_{q}^{\vartheta}\right)^{2} \dif t \leq \left(\int_0^\infty 2 \left\|d_J P_{t} A\right\|_{2}^{2} \dif t\right)^{1-\vartheta}\left(\int_0^\infty 2  \left\|d_J P_{t} A\right\|_{q}^{2} \dif t\right)^\vartheta.
\end{equation*}
The first term reproduces $V_J(A)$ by \eqref{eq:def-local-variance}, while Lemma~\ref{lem:j-derivative-decay} bounds the second term via
\begin{equation*}
    \int_0^\infty 2  \left\|d_J P_{t} A\right\|_{q}^{2} \dif t \leq  \int_0^\infty 2\, e^{-2kt} \left\|d_J  A\right\|_{q}^{2} \dif t = k^{-1} \left\|d_J  A\right\|_{q}^{2}.
\end{equation*}
Combining these bounds yields \eqref{ineq:J-quantum-variance-bound_rewrite}, which completes the proof.
\end{proof}

Finally, combining the general implication (Lemma~\ref{lem:quantum-high-order-influence-var-ineq}) with the short-time variance decay estimates (Lemma~\ref{lem:quantum-high-order-influence-var-decay}), we complete the proof of Theorem~\ref{thm:influence-var}:
\begin{proof}[Proof of Theorem~\ref{thm:influence-var}]
Let $A\in M_2(\Cbb)^{\otimes n}$, $J\subseteq [n]$ with $|J|=k\ge 1$, $p\in [1,2]$, and $q\in [1,2)$.
Optimizing \eqref{ineq:quantum-high-order-influence-var} with $\Rscr = \max\left\{k,\ \frac{\tanh(\varepsilon)}{\varepsilon} \frac{q}{2(2-q)}\ln^+\left(\frac{k\, V_J(A)}{\Inf_J^q[A]^{2/q}}\right)\right\}$ over $0<\varepsilon \leq \arctanh(\frac{2-q}{q})$, and noting that the supremum of $\frac{\tanh(\varepsilon)}{\varepsilon}$ over $\varepsilon$ gives $1$, we obtain
\begin{align*}
    \left\|A\right\|_{\infty}^{2-p}\, \Inf_J^p[A] &\geq 3^{-k(1-\frac{p}{2})}\, V_J(A) \\
    &\hspace{1cm} \cdot\sup_{0<\varepsilon \leq \arctanh(\frac{2-q}{q})}\,\max\left\{k,\, \frac{\tanh(\varepsilon)}{\varepsilon} \,\frac{q}{2(2-q)}\,\ln^+\left(\frac{k\,V_J(A)}{\Inf_J^q[A]^{2/q}}\right)\right\}\\
    &= 3^{-k(1-\frac{p}{2})}\, V_J(A)\, \max\left\{k,\,\frac{q}{2(2-q)} \ln^{+} \left(\frac{k\,V_J(A)}{\Inf_J^q[A]^{2/q}}\right)\right\}.
\end{align*}
This completes the proof of Theorem~\ref{thm:influence-var}.
\end{proof}


\section{Applications}\label{sec:applications}




In this section we derive quantum analogues of several classical inequalities from our two main results: the energy--variance inequality (Theorem~\ref{thm:energy-variance}) and the high-order influence--variance inequality (Theorem~\ref{thm:influence-var}), illustrating the scope of the unified variance-decay framework. We organize these results into two subsections, corresponding to the two main theorems.

\subsection{Energy--variance inequalities}\label{sec:app-energy-var}



Theorem~\ref{thm:energy-variance} yields corollaries that parallel well-known results on the classical Boolean cube: a quantum Talagrand-type isoperimetric inequality (Corollary~\ref{cor:quantum isoper}), a quantum Eldan--Gross inequality (Corollary~\ref{cor:quantum Eldan-Gross}), and a quantum Cordero-Erausquin--Eskenazis-type energy inequality (Corollary~\ref{cor:quantum-p-q-energy}). The proofs are immediate from Theorem~\ref{thm:energy-variance} combined with the $L^p$-Poincar\'e estimate (Proposition~\ref{prop:quantum-p-Poincare}), via appropriate parameter specializations and a few elementary rearrangements and comparisons.

\subsubsection{Talagrand-type isoperimetric inequality}\label{sec:isoperimetry}




Discrete isoperimetric inequalities on the classical Boolean cube formalize the principle that a non-constant Boolean function cannot simultaneously have a small ``boundary'' and nontrivial variance. In \cite{Talagrand1993Iso}, Talagrand established the following quantitative analogue of the Gaussian isoperimetric inequality: for all $f:\{-1,1\}^n\to \{-1,1\}$,
\begin{equation}\label{eq:Talagrand-isoper}
    \Ecal_1[f] = \left\||\nabla f|\right\|_1 \gtrsim \Var(f) \sqrt{1+\ln\left(\frac{1}{\Var(f)}\right)}.
\end{equation}
This was later strengthened by Bobkov~\cite{Bobkov1997isoperimetric}, and extended to an $L^p$ version valid for all $p\in[1,2]$ by Eldan, Kindler, Lifshitz, and Minzer~\cite[Theorem 3.8]{EKLM2025}. More recently Rouz\'e, Wirth, and Zhang raised the question of recovering such isoperimetric phenomena in quantum settings~\cite[Section 6.4]{Rouze2024quantum}. In response, Jiao, Lin, Luo, and Zhou~\cite[Theorem~1.10]{jiao2024quantum} established a quantum analogue of \eqref{eq:Talagrand-isoper} for projections on the quantum Boolean cube. Combining Theorem~\ref{thm:energy-variance} with Proposition~\ref{prop:quantum-p-Poincare}, we establish the following quantum generalization of the $L^p$ inequality in \cite[Theorem 3.8]{EKLM2025}, valid for general observables in $M_2(\Cbb)^{\otimes n}$, thereby extending \cite[Theorem~1.10]{jiao2024quantum}.
\begin{corollary}[Quantum isoperimetric inequality]\label{cor:quantum isoper}
For every $A\in M_2(\Cbb)^{\otimes n}$ with $\left\|A\right\|_{\infty}\leq 1$,
\begin{equation}\label{ineq:quantum isoper}
    \Ecal_p[A] \geq C^{\text{\rm\ref{cor:quantum isoper}}}(p)\, \Var(A) \left[1+ \frac{1}{2}\ln \left(\frac{1}{\Var(A)}\right)\right]^{\frac{p}{2}}
\end{equation}
holds for all $1\leq p< 2$, with $C^{\text{\rm\ref{cor:quantum isoper}}}(p) \gtrsim 2-p$. In particular, if $A$ is a quantum Boolean function (i.e. $A^\ast=A$ and $A^2=\mathbf 1$), then \eqref{ineq:quantum isoper} holds for all $1\le p\le 2$ with $C^{\text{\rm\ref{cor:quantum isoper}}}(p) \gtrsim 1$.
\end{corollary}
\begin{remark}
Under the assumptions, $\Var(A) \leq \left\|A\right\|_{2}^2 \leq \left\|A\right\|_{\infty}^2\leq 1$. If $\Var(A)=0$, we adopt the convention that the right-hand-side of \eqref{ineq:quantum isoper} equals $0$, so the inequality holds trivially.
\end{remark}

We treat separately the general case and the quantum Boolean case. In the first, we specialize Theorem~\ref{thm:energy-variance} to $q=p$ with $1\leq p<2$, then use the quantum $L^p$-Poincar\'e inequality (Proposition~\ref{prop:quantum-p-Poincare}) to compare the $p$-energy and the $L^p$-deviation. In the second, we specialize Theorem~\ref{thm:energy-variance} to $q=1$, then use Proposition~\ref{prop:Boolean-L1-L2-var} to obtain a cleaner expression for the variance.


\begin{proof}
Let $A\in M_2(\Cbb)^{\otimes n}$ with $\left\|A\right\|_{\infty}\leq 1$ and assume w.l.o.g. that $\Var(A)>0$. We first prove the general case. For all $1\leq p< 2$, applying Theorem~\ref{thm:energy-variance} with $q=p$ yields
\begin{equation*}
    \Ecal_p[A] \geq  \Var(A)\,\max\left\{C^{\text{\rm\ref{thm:energy-variance}}}_{1}(p),\, C^{\text{\rm\ref{thm:energy-variance}}}_{2}(p) \left[\frac{p}{2(2-p)} \ln\left(\frac{\Var(A)}{\|A-\tau(A)\|_{p}^2}\right)\right]^{\frac{p}{2}} \right\}.
\end{equation*}
Fix $\lambda\geq 0$. Note that $\lambda+\ln (x)\leq e^{\lambda-1}x$ for $x>0$, and $\Ecal_p[A]=\left\||\nabla A|\right\|_{p}^p$. Hence
\begin{align*}
    &C^{\text{\rm\ref{thm:energy-variance}}}_{1}(p)^{-\frac{2}{p}}\left(\frac{\Ecal_p[A]}{\Var(A)}\right)^{\frac{2}{p}} + C^{\text{\rm\ref{thm:energy-variance}}}_{2}(p)^{-\frac{2}{p}}\left(\frac{\Ecal_p[A]}{\Var(A)}\right)^{\frac{2}{p}} + \frac{p}{2(2-p)} \cdot e^{\lambda-1}\left(\frac{\Ecal_p[A]}{\Var(A)}\right)^{\frac{2}{p}} \\
    &\geq 1+  \frac{p}{2(2-p)} \ln \left(\frac{\Var(A)}{\|A-\tau(A)\|_{p}^2}\right) + \frac{p}{2(2-p)} \left[\lambda +  \ln \left(\left(\frac{\Ecal_p[A]}{\Var(A)}\right)^{\frac{2}{p}}\right) \right]\\
    &= 1+ \frac{1}{2} \ln \left(\frac{1}{\Var(A)}\right) + \frac{p}{2(2-p)}\left[ \lambda + 2\ln\left( \frac{\left\||\nabla A|\right\|_{p}}{\|A-\tau(A)\|_{p}}\right) \right].
\end{align*}
Further by the quantum $L^p$-Poincar\'{e} inequality (Proposition~\ref{prop:quantum-p-Poincare}),
\begin{equation*}
    \left\||\nabla A|\right\|_{p} \geq \frac{3}{2\pi} \|A-\tau(A)\|_{p} \implies \ln\left( \frac{\left\||\nabla A|\right\|_{p}}{\|A-\tau(A)\|_{p}}\right) \geq -\ln\left(\frac{2\pi}{3}\right),
\end{equation*}
so choosing $\lambda=2\ln(2\pi/3)>0$ gives
\begin{equation*}
    \left(C^{\text{\rm\ref{thm:energy-variance}}}_{1}(p)^{-\frac{2}{p}}+C^{\text{\rm\ref{thm:energy-variance}}}_{2}(p)^{-\frac{2}{p}} + e^{2\ln(\frac{2\pi}{3})-1} \frac{p}{2(2-p)} \right)\left(\frac{\Ecal_p[A]}{\Var(A)}\right)^{\frac{2}{p}} \geq 1+ \frac{1}{2} \ln \left(\frac{1}{\Var(A)}\right).
\end{equation*}
Rounding up, we obtain
\begin{equation*}
    \Ecal_p[A] \geq C^{\text{\rm\ref{cor:quantum isoper}}}_1(p)\, \Var(A) \left[1+ \frac{1}{2}\ln \left(\frac{1}{\Var(A)}\right)\right]^{\frac{p}{2}}
\end{equation*}
with $C^{\text{\rm\ref{cor:quantum isoper}}}_1(p) = \left(C^{\text{\rm\ref{thm:energy-variance}}}_{1}(p)^{-2/p}+C^{\text{\rm\ref{thm:energy-variance}}}_{2}(p)^{-2/p} + e^{2\ln(2\pi/3)-1} \frac{p}{2(2-p)} \right)^{-p/2} \gtrsim 2-p$ as desired.

We now turn to the quantum Boolean case. For all $1\leq p\leq 2$, applying Theorem~\ref{thm:energy-variance} with $q=1$ yields
\begin{equation*}
    \Ecal_p[A] \geq  \Var(A)\,\max\left\{C^{\text{\rm\ref{thm:energy-variance}}}_{1}(p),\, C^{\text{\rm\ref{thm:energy-variance}}}_{2}(p) \left[\frac{1}{2} \ln\left(\frac{\Var(A)}{\|A-\tau(A)\|_{1}^2}\right)\right]^{\frac{p}{2}} \right\}.
\end{equation*}
Note that when $A$ is unitary Hermitian, Proposition~\ref{prop:Boolean-L1-L2-var} gives $\Var(A) = \|A-\tau(A)\|_{1}$. Hence
\begin{equation*}
    \left(C^{\text{\rm\ref{thm:energy-variance}}}_{1}(p)^{-\frac{2}{p}}+ C^{\text{\rm\ref{thm:energy-variance}}}_{2}(p)^{-\frac{2}{p}}\right)\left(\frac{\Ecal_p[A]}{\Var(A)}\right)^{\frac{2}{p}} \geq 1+  \frac{1}{2} \ln\left(\frac{\Var(A)}{\|A-\tau(A)\|_{1}^2}\right) = 1+ \frac{1}{2} \ln \left(\frac{1}{\Var(A)}\right).
\end{equation*}
Rounding up, we obtain
\begin{equation*}
    \Ecal_p[A] \geq C^{\text{\rm\ref{cor:quantum isoper}}}_2(p)\, \Var(A) \left[1+\frac{1}{2} \ln \left(\frac{1}{\Var(A)}\right)\right]^{\frac{p}{2}}
\end{equation*}
with $C^{\text{\rm\ref{cor:quantum isoper}}}_2(p)= \left(C^{\text{\rm\ref{thm:energy-variance}}}_{1}(p)^{-2/p}+C^{\text{\rm\ref{thm:energy-variance}}}_{2}(p)^{-2/p} \right)^{-p/2}\gtrsim 1$ as desired.
\end{proof}

\subsubsection{Eldan--Gross inequality}\label{sec:eldan-gross}

Eldan and collaborators recently developed a pathwise stochastic analysis approach, leading to inequalities on the Boolean cube that go beyond the classical hypercontractive framework; see, e.g.,~\cite{Eldan2022,EMR2023,ES2022,EWW2023}. Using this technique, Eldan and Gross~\cite{Eldan2022Concentration} established the following inequality, validating a conjecture of Talagrand~\cite{Talagrand1997OnBA} and strengthening the well-known KKL inequality \cite{KKL1988}: for all $f:\{-1,1\}^n\to \{-1,1\}$ and $1\leq p\leq 2$,
\begin{equation}\label{eq:Eldan--Gross}
    \Ecal_p[f] = \left\||\nabla f|\right\|_p^p \gtrsim \Var(f) \left[\ln\left(2+\frac{e}{\sum_{i}\Inf_i[f]^2}\right)\right]^{\frac{p}{2}}.
\end{equation}
Alternative proofs via combinatorial and semigroup approaches were later given in~\cite{rosenthal2020ramon,EKLM2025,IZ2026}, providing simpler arguments and sharper insights for the inequality. In \cite[Theorem 1.11]{jiao2024quantum} and \cite[Theorem 1.9]{JLZ2025}, Jiao, Lin, Luo and Zhou established quantum analogues of the Eldan--Gross inequality \eqref{eq:Eldan--Gross} for projections. In contrast, by specializing Theorem~\ref{thm:energy-variance} to $q=1$ and combining it with the quantum Talagrand isoperimetric inequality (Corollary~\ref{cor:quantum isoper}), we obtain the following quantum Eldan--Gross inequality \eqref{eq:Eldan--Gross}, valid for general observables in $M_2(\Cbb)^{\otimes n}$.

\begin{corollary}[Quantum Eldan--Gross inequality]\label{cor:quantum Eldan-Gross}
For every $A\in M_2(\Cbb)^{\otimes n}$ with $\left\|A\right\|_{\infty}\leq 1$,
\begin{equation}\label{ineq:quantum Eldan-Gross}
    \Ecal_p[A] \geq C^{\text{\rm\ref{cor:quantum Eldan-Gross}}}(p)\, \Var(A) \left[1+\frac{1}{2}\ln^{+} \left(\frac{1}{\sum_j\Inf_j^{1}[A]^2}\right)\right]^{\frac{p}{2}}
\end{equation}
holds for all $1\leq p< 2$, with $C^{\text{\rm\ref{cor:quantum Eldan-Gross}}}(p)\gtrsim 2-p$. In particular, if $A$ is a quantum Boolean function (i.e. $A^\ast=A$ and $A^2=\mathbf 1$), then \eqref{ineq:quantum Eldan-Gross} holds for all $1\leq p\leq  2$ with $C^{\text{\rm\ref{cor:quantum Eldan-Gross}}}(p)\gtrsim 1$.
\end{corollary}
\begin{proof}
Let $A\in M_2(\Cbb)^{\otimes n}$ with $\left\|A\right\|_{\infty}\leq 1$ and assume w.l.o.g. that $\Var(A)> 0$. For $1\leq p\leq 2$, applying Theorem~\ref{thm:energy-variance} with $q=1$ yields,
\begin{equation*}
   \Ecal_p[A] \geq C^{\text{\rm\ref{thm:energy-variance}}}_{2}(p) \, \Var(A)  \left[\frac{1}{2}\ln^{+}\left(\frac{\Var(A)}{\sum_{j}\Inf_j^{1}[A]^2}\right)\right]^{\frac{p}{2}}.
\end{equation*}
On the other hand, Corollary~\ref{cor:quantum isoper} yields
\begin{equation*}
    \Ecal_p[A] \ge C^{\text{\rm\ref{cor:quantum isoper}}}(p)\, \Var(A)\left[1+\frac{1}{2}\ln\left(\frac{1}{\Var(A)}\right)\right]^{\frac{p}{2}}.
\end{equation*}
Combining these bounds, we deduce
\begin{equation*}
    \left(C^{\text{\rm\ref{cor:quantum isoper}}}(p)^{-\frac{2}{p}}+C^{\text{\rm\ref{thm:energy-variance}}}_{2}(p)^{-\frac{2}{p}}\right)\left(\frac{\Ecal_p[A]}{\Var(A)}\right)^{\frac{2}{p}}  \geq 1+\frac{1}{2}\ln\left(\frac{1}{\Var(A)}\right) + \frac{1}{2}\ln^{+}\left(\frac{\Var(A)}{\sum_{j}\Inf_j^{1}[A]^2}\right).
\end{equation*}
Since $\Var(A)\le \|A\|_2^2\le \|A\|_\infty^2\le 1$,
\begin{align*}
    \ln\left(\frac{1}{\Var(A)}\right) + \ln^{+}\left(\frac{\Var(A)}{\sum_{j}\Inf_j^{1}[A]^2}\right) = \ln\left(\frac{1}{\Var(A)}\right) + \max\left\{\ln\left(\frac{\Var(A)}{\sum_{j}\Inf_j^{1}[A]^2}\right),\ 0 \right\}\\
    \geq \max\left\{\ln\left(\frac{1}{\sum_{j}\Inf_j^{1}[A]^2}\right),\ 0\right\} = \ln^{+}\left(\frac{1}{\sum_{j}\Inf_j^{1}[A]^2}\right).
\end{align*}
Rounding up, we obtain
\begin{equation*}
    \Ecal_p[A] \geq C^{\text{\rm\ref{cor:quantum Eldan-Gross}}}(p) \Var(A) \left[1+\frac{1}{2}\ln^{+} \left(\frac{1}{\sum_j\Inf_j^{1}[A]^2}\right)\right]^{\frac{p}{2}}
\end{equation*}
with $C^{\text{\rm\ref{cor:quantum Eldan-Gross}}}(p) = \left(C^{\text{\rm\ref{cor:quantum isoper}}}(p)^{-2/p}+C^{\text{\rm\ref{thm:energy-variance}}}_{2}(p)^{-2/p} \right)^{-p/2}$, which enjoys the same quantitative estimates as $C^{\text{\rm\ref{cor:quantum isoper}}}(p)$. This completes the proof.
\end{proof}

\subsubsection{Cordero-Erausquin--Eskenazis-type energy inequality}\label{sec:cee}




Motivated by the Talagrand--KKL inequality~\cite{Talagrand1994OnRA}, Cordero-Erausquin and Eskenazis established the following $L^p$--$L^1$ inequality~\cite[Theorem 4]{CE2023}: for all $f:\{-1,1\}^n\to\Cbb$ and $p>1$,
\begin{equation}\label{eq:CEE_scalar_L1_Lp}
     \frac{\||\nabla f|\|_{p}}{1+\sqrt{\ln\bigl(\||\nabla f|\|_{p}/\||\nabla f|\|_{1}\bigr)}} \gtrsim_{p} \|f-\mathbb E_{\mu}f\|_{p}.
\end{equation}
Equivalently, this can be expressed as the $L^p$--$L^1$ energy inequality
\begin{equation}\label{eq:CEE_scalar_L1_Lp_power}
    \frac{\Ecal_p[f]} {\left[1+\ln\bigl(\Ecal_p[f]\,/\,\Ecal_1[f]^p\bigr)\right]^{p/2}}\gtrsim_{p} \|f-\mathbb E_{\mu}f\|_{p}^{p}.
\end{equation}
A quantum analogue the Cordero-Erausquin--Eskenazis inequality \eqref{eq:CEE_scalar_L1_Lp} was established by Jiao, Lin, Luo and Zhou \cite[Theorem 1.8]{JLZ2025}, with a weaker exponent for the logarithmic factor. In contrast, using Theorem~\ref{thm:energy-variance} and Proposition~\ref{prop:quantum-p-Poincare}, we obtain the following quantum analogue of the $L^p$--$L^1$ energy inequality \eqref{eq:CEE_scalar_L1_Lp_power}, allowing a broader range of $q$ while restricted to $p\leq 2$.
\begin{corollary}[Quantum $L^p$-$L^q$ energy inequality]\label{cor:quantum-p-q-energy}
For every $A\in M_2(\Cbb)^{\otimes n}$ with $\left\|A\right\|_{\infty}\leq 1$,
\begin{equation}\label{ineq:Quantum Cordero-Erausquin--Eskenazis}
    \frac{\Ecal_p[A]}{\left[1+ \frac{q}{2(2-q)}\ln^{+} \bigl(\Ecal_p[A]\,/\,\Ecal_q[A]^{2/q}\bigr)\right]^{p/2}} \geq C^{\text{\rm\ref{cor:quantum-p-q-energy}}}(p,q)\, \Var(A)
\end{equation}
holds for all $1\leq q\leq p\leq 2$, with $C^{\text{\rm\ref{cor:quantum-p-q-energy}}}(p,q)\gtrsim 2-q$.
\end{corollary}

Similar to Corollary~\eqref{cor:quantum isoper}, we use the $L^q$-Poincar\'{e} inequality (Proposition~\ref{prop:quantum-p-Poincare}) to bound the $L^q$-deviation from Theorem~\eqref{thm:energy-variance} by $q$-energy, while slightly adjusting the balance between energy and variance terms.

\begin{proof}
Let $A\in M_2(\Cbb)^{\otimes n}$ with $\left\|A\right\|_{\infty}\leq 1$ and $1\leq q\leq p\leq 2$. Assume w.l.o.g. that $\Var(A)> 0$. Applying Theorem \ref{thm:energy-variance} yields
\begin{equation*}
    \Ecal_p[A] \geq  \Var(A)\,\max\left\{C^{\text{\rm\ref{thm:energy-variance}}}_{1}(p),C^{\text{\rm\ref{thm:energy-variance}}}_{2}(p)\cdot \left[ \frac{q}{2(2-q)} \ln \left(\frac{\Var(A)}{\|A-\tau(A)\|_{q}^2}\right)\right]^{\frac{p}{2}}\right\}.
\end{equation*}
Fix $\lambda\geq 0$. Since $\lambda+\ln^+ (x)\leq e^{\lambda-1}x$ for $x>0$, we obtain
\begin{align*}
    &C^{\text{\rm\ref{thm:energy-variance}}}_{1}(p)^{-\frac{2}{p}}\left(\frac{\Ecal_p[A]}{\Var(A)}\right)^{\frac{2}{p}} + C^{\text{\rm\ref{thm:energy-variance}}}_{2}(p)^{-\frac{2}{p}}\left(\frac{\Ecal_p[A]}{\Var(A)}\right)^{\frac{2}{p}} + \frac{q}{2(2-q)}\cdot \frac{p}{2} \cdot e^{\lambda-1}\left(\frac{\Ecal_p[A]}{\Var(A)}\right)^{\frac{2}{p}} \\
    &\geq 1+  \frac{q}{2(2-q)} \ln \left(\frac{\Var(A)}{\|A-\tau(A)\|_{q}^2}\right) + \frac{q}{2(2-q)}\cdot \frac{p}{2} \left[\lambda +  \ln^+ \left(\left(\frac{\Ecal_p[A]}{\Var(A)}\right)^{\frac{2}{p}}\right) \right].
\end{align*}
Recalling $\Ecal_q[A]=\left\||\nabla A|\right\|_{q}^q$ and $\|A-\tau(A)\|_{q}^2 \leq \Var(A)$, it follows that
\begin{align*}
    &\ln \left(\frac{\Var(A)}{\|A-\tau(A)\|_{q}^2}\right) + \frac{p}{2} \left[\lambda +  \ln^+ \left(\left(\frac{\Ecal_p[A]}{\Var(A)}\right)^{\frac{2}{p}}\right) \right]\\
    &= \frac{p}{2} \cdot\lambda + \ln \left(\frac{\Var(A)}{\|A-\tau(A)\|_{q}^2}\right) + \max\left\{\ln \left(\frac{\Ecal_p[A]}{\Var(A)}\right),\ 0\right\}\\
    &=  \max\left\{\ln \left(\frac{\Ecal_p[A]}{\Ecal_q[A]^{2/q}}\right) + \left[ \frac{p}{2}\cdot\lambda + 2\ln\left( \frac{\left\||\nabla A|\right\|_{q}}{\|A-\tau(A)\|_{q}}\right) \right],\ 0\right\}.
\end{align*}
Further by the quantum $L^q$-Poincar\'{e} inequality (Proposition~\ref{prop:quantum-p-Poincare}),
\begin{equation*}
    \left\||\nabla A|\right\|_{q} \geq \frac{3}{2\pi} \|A-\tau(A)\|_{q} \implies \ln\left( \frac{\left\||\nabla A|\right\|_{p}}{\|A-\tau(A)\|_{q}}\right) \geq -\ln\left(\frac{2\pi}{3}\right),
\end{equation*}
so choosing $\lambda=(4/p)\ln(2\pi/3)>0$ gives
\begin{align*}
    &\max\left\{\ln \left(\frac{\Ecal_p[A]}{\Ecal_q[A]^{2/q}}\right) + \left[ \frac{p}{2}\cdot\lambda + 2\ln\left( \frac{\left\||\nabla A|\right\|_{q}}{\|A-\tau(A)\|_{q}}\right) \right],\ 0\right\}\\
    &\geq \max\left\{\ln \left(\frac{\Ecal_p[A]}{\Ecal_q[A]^{2/q}}\right),\ 0\right\} = \ln^{+} \left(\frac{\Ecal_p[A]}{\Ecal_q[A]^{2/q}}\right).
\end{align*}
Combining theses bounds yields
\begin{equation*}
    \left(C^{\text{\rm\ref{thm:energy-variance}}}_{1}(p)^{-\frac{2}{p}}+C^{\text{\rm\ref{thm:energy-variance}}}_{2}(p)^{-\frac{2}{p}} + e^{\frac{4}{p}\ln(\frac{2\pi}{3})-1} \frac{p\,q}{4(2-q)} \right)\left(\frac{\Ecal_p[A]}{\Var(A)}\right)^{\frac{2}{p}} \geq 1+  \frac{q}{2(2-q)} \ln^{+} \left(\frac{\Ecal_p[A]}{\Ecal_q[A]^{2/q}}\right).
\end{equation*}
Rounding up, we obtain
\begin{equation*}
    \frac{\Ecal_p[A]}{\left[1+ \frac{q}{2(2-q)}\ln^{+} \bigl(\Ecal_p[A]\,/\,\Ecal_q[A]^{2/q}\bigr)\right]^{p/2}} \geq C^{\text{\rm\ref{cor:quantum-p-q-energy}}}(p,q)\, \Var(A),
\end{equation*}
with $C^{\text{\rm\ref{cor:quantum-p-q-energy}}}(p,q)=\left(C^{\text{\rm\ref{thm:energy-variance}}}_{1}(p)^{-2/p}+C^{\text{\rm\ref{thm:energy-variance}}}_{2}(p)^{-2/p} + e^{(4/p)\ln(2\pi/3)-1} \frac{p\,q}{4(2-q)} \right)^{-p/2} \gtrsim 2-q$ as desired.
\end{proof}

\subsection{High-order influence--variance inequalities}\label{sec:app-influence-var}


Theorem~\ref{thm:influence-var} provides high-order influence--variance bounds: a quantum Talagrand--KKL-type high-order influence inequality (Corollary~\ref{cor:quantum-p-q-influence}) and a quantum high-order partial isoperimetric inequality (Corollary~\ref{cor:quantum-partial-isoper}).
The proofs are straightforward from Theorem~\ref{thm:influence-var} through suitable parameter choices and elementary rearrangements.

\subsubsection{Talagrand--KKL-type high-order influence inequality}\label{sec:proof-cor15}

The classical theory of influences on the Boolean cube was revolutionized by the famous KKL inequality~\cite{KKL1988}, showing that every nontrivial Boolean function has a ``influential coordinate'' with comparatively large influence. Talagrand later strengthened this principle by establishing the following $L^2$--$L^1$ influence inequality~\cite{Talagrand1994OnRA}: for all $f:\{-1,1\}^n\to \Rbb$,
\begin{equation}\label{ineq:classic Talagrand L1-L2}
    \sum_{i=1}^n\frac{\Inf_i[f]}{1+\ln (\Inf_i[f]/\Inf^1_i[f]^2)} \gtrsim \Var(f).
\end{equation}
This inequality has since been extended to $L^p$ and high-order variants, and to broader settings including hypercontractive spaces and Banach space-valued functions~\cite{CL2012hypercontr,Tanguy2020,CE2023,P2025}. Quantum analogues have also been established, providing noncommutative counterparts of these classical principles~\cite{Rouze2024quantum,Blecher2024geometricinfluencesquantumboolean,JLZ2025}.
Using Theorem~\ref{thm:influence-var}, we obtain the following quantum $L^p$--$L^q$ high-order influence inequality that unifies and extends these principles.

\begin{corollary}[Quantum $L^p$--$L^q$ high-order influence inequality]\label{cor:quantum-p-q-influence}
For every $A\in M_2(\Cbb)^{\otimes n}$ with $\left\|A\right\|_{\infty}\leq 1$, $k\in[n]$, $1\leq p\leq 2$ and $1\leq q<2$,
\begin{equation}\label{eq:quantum Talagrand}
   \sum_{\substack{J\subseteq[n]\\ |J|=k}} \frac{\Inf_J^p[A]}{1+\frac{q}{2(2-q)} \ln^+\bigl(\Inf_J^p[A]/\Inf_J^q[A]^{2/q}\bigr)} \geq C^{\text{\rm\ref{cor:quantum-p-q-influence}}}(k,p,q)\, W^{\geq k}[A]
\end{equation}
holds with $C^{\text{\rm\ref{cor:quantum-p-q-influence}}}(k,p,q)=\left((1+k)\,3^{k(1-\frac{p}{2})}+ \frac{q}{2e(2-q)}\right)^{-1}$.
\end{corollary}
\begin{remark}
When $k=1$, inequality \eqref{eq:quantum Talagrand} reduces to
\begin{equation*}
    \sum_{j\in [n]}
    \frac{\Inf_j^p[A]}{1+\frac{q}{2(2-q)} \ln^+\!\bigl(\Inf_j^p[A]/\Inf_j^q[A]^{2/q}\bigr)}
    \;\geq\;
    C^{\text{\rm\ref{cor:quantum-p-q-influence}}}(1,p,q)\,\Var(A),
\end{equation*}
which recovers and unifies several first-order Talagrand--KKL-type results:
\begin{itemize}
    \item For $q=1$ and $1\leq p\leq 2$, we obtain
    \begin{equation*}
        \sum_{j\in [n]} \frac{\Inf_j^p[A]}{1+\frac{1}{2}\ln^+(\Inf_j^p[A]/\Inf_j^1[A]^2)} \geq C^{\text{\rm\ref{cor:quantum-p-q-influence}}}(1,p,1)\,\Var(A),
    \end{equation*}
    with $C^{\text{\rm\ref{cor:quantum-p-q-influence}}}(1,p,1)\gtrsim 1$. This provides a quantum analogue of the classical $L^p$--$L^1$ inequality of Cordero-Erausquin and Ledoux~\cite[Theorem 6]{CL2012hypercontr}, yielding a quantum extension of \eqref{ineq:classic Talagrand L1-L2}.
    \item For $q=p$ with $1\leq p< 2$, we obtain
    \begin{equation*}
        \sum_{j\in [n]} \frac{\Inf_j^p[A]}{1+\frac{1}{2}\ln^+\bigl(1/\Inf_j^p[A]\bigr)} \geq  C^{\text{\rm\ref{cor:quantum-p-q-influence}}}(1,p,p)\,\Var(A).
    \end{equation*}
    with $C^{\text{\rm\ref{cor:quantum-p-q-influence}}}(1,p,p)\gtrsim 2-p$. This recovers the quantum $L^p$ influence inequality of Blecher, Gao and Xu~\cite[(3.22)]{Blecher2024geometricinfluencesquantumboolean}, improving upon the earlier result of Rouz\'e, Wirth and Zhang~\cite{Rouze2024quantum}.
\end{itemize}
For higher orders $k\geq 1$, inequality \eqref{eq:quantum Talagrand} provides flexible choices of $p$ and $q$, but is quantitatively weaker in the logarithmic exponent compared with \cite[Theorem~1.2]{Blecher2024geometricinfluencesquantumboolean}.
\end{remark}

\begin{proof}
Let $A\in M_2(\Cbb)^{\otimes n}$ with $\|A\|_{\infty}\leq 1$, $k\in[n]$, $1\leq p\leq 2$ and $1\leq q<2$. For $J\subseteq[n]$ with $|J|=k$, we first show that
\begin{equation}\label{eq:J-quantum-Talagrand}
    \frac{\Inf_J^p[A]}{1+\frac{q}{2(2-q)}\ln^+(\Inf_J^p[A]/\Inf_J^q[A]^{2/q})}\geq C^{\text{\rm\ref{cor:quantum-p-q-influence}}}(k,p,q)\, k\, V_J(A).
\end{equation}
Assume w.l.o.g. that $V_J(A)>0$. Applying Theorem~\ref{thm:influence-var} yields
\begin{equation*}
\begin{split}
    \frac{\Inf_J^p[A]}{k\, V_J(A)} &\geq  3^{-k(1-\frac{p}{2})} \max\left\{1,\,\frac{q}{2k(2-q)} \ln^{+}\left( \frac{k\, V_J(A)}{\Inf_J^q[A]^{2/q}}\right)\right\} \\
    &\geq \left(1+k\right)^{-1}3^{-k(1-\frac{p}{2})} \left[1+ \frac{q}{2(2-q)} \ln^{+}\left( \frac{k\, V_J(A)}{\Inf_J^q[A]^{2/q}}\right)\right].
\end{split}
\end{equation*}
Using $\ln^+(x)+\ln^+(y)\geq \ln^+(xy)$ for $x,y>0$ and $\ln^+ (x)\leq e^{-1}x$ for $x>0$, we deduce
\begin{align*}
    &\left(1+k\right)3^{k(1-\frac{p}{2})} \frac{\Inf_J^p[A]}{k\,V_J(A)}+ \frac{q}{2(2-q)}\cdot e^{-1}\,\frac{\Inf_J^p[A]}{k\,V_J(A)}\\
    &\geq   1+ \frac{q}{2(2-q)}\ln^{+}\left( \frac{k\, V_J(A)}{\Inf_J^q[A]^{2/q}}\right) + \frac{q}{2(2-q)}\ln^+ \left(\frac{\Inf_J^p[A]}{k V_J(A)}\right) \\
    &\geq 1 + \frac{q}{2(2-q)} \ln^+ \left(\frac{\Inf_J^p[A]}{\Inf_J^q[A]^{2/q}}\right).
\end{align*}
Rounding up, we obtain
\begin{equation*}
    \frac{\Inf_J^p[A]}{1+\frac{q}{2(2-q)}\ln^+(\Inf_J^p[A]/\Inf_J^q[A]^{2/q})}\geq C^{\text{\rm\ref{cor:quantum-p-q-influence}}}(k,p,q) k\, V_J(A),
\end{equation*}
with $C^{\text{\rm\ref{cor:quantum-p-q-influence}}}_k(p,q):=\left(\left(1+k\right)3^{k(1-\frac{p}{2})}+ \frac{q}{2e(2-q)}\right)^{-1}$ as desired.

Finally, summing \eqref{eq:J-quantum-Talagrand} over all $J\subseteq[n]$ with $|J|=k$, and using Proposition~\ref{prop:sumVarJ-Fourier}, we obtain
\begin{equation*}
    \sum_{\substack{J\subseteq[n]\\ |J|=k}} \frac{\Inf_J^p[A]}{1+\frac{q}{2(2-q)} \ln^+\bigl(\Inf_J^p[A]/\Inf_J^q[A]^{2/q}\bigr)} \geq C^{\text{\rm\ref{cor:quantum-p-q-influence}}}(k,p,q)\, \sum_{\substack{J\subseteq[n]\\ |J|=k}} k\, V_J(A) \geq  C^{\text{\rm\ref{cor:quantum-p-q-influence}}}(k,p,q)\, W^{\geq k}[A],
\end{equation*}
which completes the proof.
\end{proof}

\subsubsection{High-order partial isoperimetric inequality}\label{sec:proof-cor18}

Isoperimetric phenomena on the classical Boolean cube are closely tied to influence inequalities. In~\cite[Section~6.4]{Rouze2024quantum}, Rouz\'e, Wirth and Zhang raised the question of recovering such principle in the quantum setting. Using their Talagrand--KKL-type influence inequality, they deduced an isoperimetric-type bound~\cite[Theorem~6.12]{Rouze2024quantum}, which serves as a quantum analogue of the classical result of Cordero-Erausquin and Ledoux~\cite[Corollary~7]{CL2012hypercontr}: for all projections $A\in M_2(\Cbb)^{\otimes n}$,
\begin{equation}\label{eq:RWZ-quantum-partial-isoper}
    \max_{j\in [n]}\Inf_j^{1}[A]\gtrsim \frac{\Var(A)}{n} \left[\ln\left(\frac{n}{\Var(A)}\right)\right]^{1/2}.
\end{equation}
The authors further conjectured whether an $L^2$ variant of~\eqref{eq:RWZ-quantum-partial-isoper} might hold. In contrast, using Theorem~\ref{thm:influence-var}, we obtain the following quantum high-order partial isoperimetric inequality, which yields sub-$L^2$ high-order refinements of~\eqref{eq:RWZ-quantum-partial-isoper}.

\begin{corollary}[Quantum high-order partial isoperimetric inequality]\label{cor:quantum-partial-isoper}
For every $A\in M_2(\Cbb)^{\otimes n}$ with $\|A\|_{\infty}\leq 1$, $J\subseteq[n]$ with $|J|=k\ge 1$, and $1 \leq p<2$,
\begin{equation}\label{eq:quantum-partial-isoper}
    \Inf_J^p[A] \ge C^{\text{\rm\ref{cor:quantum-partial-isoper}}}(k,p)\left(k\,V_J(A)\right)\left[ 1+\frac12\ln\left(\frac{1}{kV_J(A)}\right)\right],
\end{equation}
and consequently,
\begin{equation}\label{eq:Weight from maxinf}
    \max_{\substack{J\subseteq[n]\\ |J|=k}}\Inf_J^{p}[A]\geq C^{\text{\rm\ref{cor:quantum-partial-isoper}}}(k,p)\,\frac{W^{\ge k}[A]}{\binom{n}{k}} \left[1+\frac{1}{2}\ln\left(\frac{\binom{n}{k}}{W^{\ge k}[A]}\right)\right],
\end{equation}
where $C^{\text{\rm\ref{cor:quantum-partial-isoper}}}(k,p)=\left((1+k)\, 3^{k(1-\frac{p}{2})}+\frac{1}{e(2-p)}\right)^{-1}$.
\end{corollary}
\begin{remark}
When $k=1$, inequality \eqref{eq:Weight from maxinf} reduces to
\begin{equation*}
    \max_{j\in [n]}\Inf_j^{p}[A]\geq C^{\text{\rm\ref{cor:quantum-partial-isoper}}}(1,p)\,\frac{\Var(A)}{n} \left[1+\frac{1}{2}\ln\left(\frac{n}{W^{\ge k}[A]}\right)\right],
\end{equation*}
with $C^{\text{\rm\ref{cor:quantum-partial-isoper}}}(1,p)\gtrsim 2-p$. This strengthens and extends \eqref{eq:RWZ-quantum-partial-isoper} to $L^p$ variants for $p\in [1,2)$, valid for general observables $A\in M_2(\Cbb)^{\otimes n}$. For higher orders $k\geq 1$, our inequality gives a quantitatively weaker KKL-type bound compared to the result of Blecher, Gao and Xu \cite[Theorem 1.2]{Blecher2024geometricinfluencesquantumboolean}, but situates these bounds within a new perspective on partial isoperimetric inequalities.
\end{remark}

%
%

\begin{proof}
Let $A\in M_2(\Cbb)^{\otimes n}$ with $\|A\|_{\infty}\leq 1$ and $1\leq p\leq 2$. For $J\subseteq[n]$ with $|J|=k\geq 1$, we first establish \eqref{eq:quantum-partial-isoper}. Assume w.l.o.g. $V_J(A)>0$. Applying Theorem~\ref{thm:influence-var} with $q=p$ yields
\begin{align*}
    \frac{\Inf_J^p[A]}{k\, V_J(A)} &\geq  3^{-k(1-\frac{p}{2})} \max\left\{1,\,\frac{p}{2k(2-p)} \ln^{+}\left( \frac{k\, V_J(A)}{\Inf_J^p[A]^{2/p}}\right)\right\} \\
    &\geq \left(1+k\right)^{-1}3^{-k(1-\frac{p}{2})} \left[1+ \frac{p}{2(2-p)} \ln^{+}\left( \frac{k\, V_J(A)}{\Inf_J^p[A]^{2/p}}\right)\right].
\end{align*}
Since $\ln^+ x\leq e^{-1}x$ and $k\,V_J(A)\leq \|A\|_2^2\leq \|A\|_\infty^2\leq 1$, we deduce
\begin{equation*}
    \begin{split}
   &\left(1+k\right)\cdot 3^{k(1-\frac{p}{2})} \,\frac{\Inf_J^p[A]}{k\, V_J(A)}+\frac{p}{2(2-p)}\cdot\frac{2}{p}\cdot e^{-1}\,\frac{\Inf_J^p[A]}{k\, V_J(A)}\\
   &\ge 1+ \frac{p}{2(2-p)} \ln^{+}\left( \frac{k\, V_J(A)}{\Inf_J^p[A]^{2/p}}\right)+\frac{p}{2(2-p)}\cdot\frac{2}{p}\,\ln^{+} \left(\frac{\Inf_J^p[A]}{k\, V_J(A)}\right)\\
   &\geq 1+\frac{p}{2(2-p)}\ln \left[\left(\frac{1}{k\, V_J(A)}\right)^{\frac{2}{p}-1}\right]=1+ \frac{1}{2}\ln \left( \frac{1}{kV_J(A)}\right).
    \end{split}
\end{equation*}
Rounding up, we obtain
\begin{equation*}
    \Inf_J^p[A] \ge C^{\text{\rm\ref{cor:quantum-partial-isoper}}}_k(p)\left(k\,V_J(A)\right)\left[ 1+\frac12\ln\left(\frac{1}{k\,V_J(A)}\right)\right],
\end{equation*}
with $C^{\text{\rm\ref{cor:quantum-partial-isoper}}}(k,p)=\left((1+k)\, 3^{k(1-\frac{p}{2})}+\frac{1}{e(2-p)}\right)^{-1}$ as desired.

We now turn to \eqref{eq:Weight from maxinf}. Fix $k\in [n]$, and let $J_\star=\argmax_{|J|=k}V_J(A)$. Proposition~\ref{prop:sumVarJ-Fourier} yields
\begin{equation*}
    k\,V_{J_\star}(A)=\max_{|J|=k}k\,V_J(A)\ge \frac{1}{\binom{n}{k}}\sum_{|J|=k}k\,V_J(A) \ge \frac{W^{\ge k}[A]}{\binom{n}{k}}.
\end{equation*}
Since $x\mapsto x\left(1-\frac{1}{2}\ln x\right)$ is increasing on $(0,1]$,
for $0< W^{\ge k}[A]/\binom{n}{k}\leq k\,V_{J_\star}(A)\leq 1$ we obtain
\begin{align*}
    \max_{|J|=k}\Inf_J^p[A] &\ge C^{\text{\rm\ref{cor:quantum-partial-isoper}}}_k(p)\left(k\,V_{J_\star}(A)\right)\left[ 1+\frac12\ln\left(\frac{1}{k\,V_{J_\star}(A)}\right)\right]\\
    &\geq C^{\text{\rm\ref{cor:quantum-partial-isoper}}}(k,p)\, \frac{W^{\ge k}[A]}{\binom{n}{k}}\left[1+\frac{1}{2}\ln\left(\frac{\binom{n}{k}}{W^{\ge k}[A]}\right)\right],
\end{align*}
which completes the proof.
\end{proof}

\section*{Acknowledgments}
\addcontentsline{toc}{section}{Acknowledgments}
Fan Chang is supported by the NSFC under grant 124B2019 and the Institute for Basic Science (IBS-R029-C4). The authors are grateful to Haonan Zhang and Sijie Luo for reading an earlier draft of this paper and for providing valuable comments and suggestions. They also thank the referees for their careful reading and helpful remarks.

\bibliographystyle{abbrv}
\bibliography{reference}
\addcontentsline{toc}{section}{References}
\appendix

\section{Basic Analytic Facts}\label{subsec:prelim-analytic}

We collect several analytic inequalities used repeatedly in the sequel.


\begin{lemma}[Noncommutative vector-valued H\"older inequality]\label{lem:nc-holder}
Let $A_{j},B_{j}\in  M_2(\mathbb{C})^{\otimes n}$, $m\in \Nbb$ and $j\in [m]$. Then for every $p,q\in [1,\infty]$ such that $\frac1p+\frac1q=1$, we have
\begin{equation*}
    \left|\tau\bigg(\sum_{j=1}^{m} A_j^{*} B_j\bigg)\right| \leq \left\|\bigg(\sum_{j=1}^{m} |A_j|^2\bigg)^{1/2}\right\|_{p} \left\|\bigg(\sum_{j=1}^{m} |B_j|^2\bigg)^{1/2}\right\|_{q}
\end{equation*}
\end{lemma}


\begin{lemma}[Log-convexity of the normalized Schatten norm]\label{lem:holder-interp-Schatten}
For $A\in M_2(\mathbb{C})^{\otimes n}$, $r\mapsto \|A\|_{1/r}$ is log-convex. More specifically, if $p_1, p_2 \in [1,\infty]$ and $\frac{1}{p_\theta} = \frac{1-\theta}{p_1} + \frac{\theta}{p_2}$ $\theta\in [0,1]$, then
\begin{equation*}
    \|A\|_{p_\theta} \leq \|A\|_{p_1}^{1-\theta} \cdot \|A\|_{p_2}^{\theta}.
\end{equation*}
In particular,
\begin{itemize}
    \item for $q \in [2,\infty]$, we have $\frac{1}{q} = \frac{2}{q} \cdot \frac{1}{2} + (1-\frac{2}{q}) \frac{1}{\infty}$, and hence
    \begin{equation*}
        \left\|A\right\|_{q} \leq \left\|A\right\|_{2}^{\frac{2}{q}} \left\|A\right\|_{\infty}^{\frac{q-2}{q}}.
    \end{equation*}
    \item for $0<\epsilon \leq \arctanh(\frac{2-q}{q})$, we have $\frac{1}{1+e^{-2\varepsilon}}=\frac{1-\theta}{2}+\frac{\theta}{q}$ with $\theta=\frac{q}{2-q}\,\tanh(\varepsilon)$, and hence
    \begin{equation*}
        \left\|A\right\|_{1+e^{-2\varepsilon}} \leq \left\|A\right\|_{2}^{1-\theta} \left\|A\right\|_{q}^{\theta}.
    \end{equation*}
\end{itemize}
\end{lemma}


\begin{lemma}[Kadison--Schwarz inequality~\cite{kadison1952generalized}]\label{lem:Kadison--Schwarz}
Let $\Phi$ be a unital positive map between $C^*$-algebras. Then for every $A$ in the domain of $\Phi$, we have $\Phi(|A|^2)\ge |\Phi(A)|^2$.
\end{lemma}

\begin{lemma}[Matrix subadditivity inequality~\cite{BU2007subadditivity}]\label{lem:subadditivity}
Let $A, B$ be nonnegative Hermitian matrices, $f:[0,\infty)\to [0,\infty)$ be a concave function, and $\|\cdot\|$ be a unitary invariant trace norm. Then
\begin{equation*}
    \left\|f(A+B)\right\| \leq \left\|f(A)\right\| + \left\|f(B)\right\|.
\end{equation*}
In particular, taking $f(x)=x^{r}$ with $0<r\leq 1$ and $\|\cdot\| = \tau(|\cdot|)$, we have
\begin{equation*}
    \tau(|A+B|^{r}) \leq  \tau(|A|^{r}) + \tau(|B|^{r}).
\end{equation*}
\end{lemma}

\section{Useful Estimates}

\begin{lemma}\label{lem:beta_upper}
Let $a\in[1/2,1]$. Then for all $r\in[0,1]$,
\begin{equation*}
    \Beta\left(\frac{3}{2}\,\frac{r}{1+r};a,a\right)\le \frac{1}{a}\left(\frac32\, r\right)^{a}.
\end{equation*}
\end{lemma}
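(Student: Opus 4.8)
The plan is to reduce the lemma to a one-variable differential comparison. Introduce $\phi(x):=\frac1a\bigl(\tfrac{3x}{3-2x}\bigr)^{a}$ on $x\in[0,3/4]$. A short algebraic check shows $3-2\cdot\frac{3r}{2(1+r)}=\frac{3}{1+r}$, hence $\frac{3\cdot\frac{3r}{2(1+r)}}{\,3-2\cdot\frac{3r}{2(1+r)}\,}=\frac{3r}{2}$, so $\phi\bigl(\frac{3r}{2(1+r)}\bigr)=\frac1a\bigl(\frac32 r\bigr)^{a}$; moreover $r\mapsto\frac{3r}{2(1+r)}$ is increasing and maps $[0,1]$ onto $[0,3/4]$. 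Thus it suffices to prove $\Beta(x;a,a)\le\phi(x)$ for every $x\in[0,3/4]$.

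First I would observe that $x\mapsto\Beta(x;a,a)=\int_0^x t^{a-1}(1-t)^{a-1}\dif t$ and $\phi$ both vanish at $x=0$ and are continuous on $[0,3/4]$ with integrable derivatives on $(0,3/4]$ (the factor $t^{a-1}$ is integrable near $0$ since $a>0$, and $3/4<1$ keeps us away from the other singularity). Hence it is enough to establish the pointwise derivative bound $\frac{d}{dx}\Beta(x;a,a)\le\phi'(x)$ for $x\in(0,3/4)$. Here $\frac{d}{dx}\Beta(x;a,a)=x^{a-1}(1-x)^{a-1}$, while using $\frac{d}{dx}\frac{3x}{3-2x}=\frac{9}{(3-2x)^2}$ one computes $\phi'(x)=\bigl(\tfrac{3x}{3-2x}\bigr)^{a-1}\frac{9}{(3-2x)^2}=3^{a+1}x^{a-1}(3-2x)^{-(a+1)}$. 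Cancelling $x^{a-1}>0$, the required inequality becomes $(1-x)^{a-1}(3-2x)^{a+1}\le 3^{a+1}$, i.e. $h(x):=(a-1)\ln(1-x)+(a+1)\ln\!\bigl(1-\tfrac{2x}{3}\bigr)\le 0$.

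Since $h(0)=0$, I would finish by showing $h$ is non-increasing on $[0,3/4]$. One has $h'(x)=\frac{1-a}{1-x}-\frac{2(a+1)}{3-2x}$, and clearing the positive denominators $1-x$ and $3-2x$ turns $h'(x)\le 0$ into $(1-a)(3-2x)\le 2(a+1)(1-x)$, which expands to $5a-1-4ax\ge 0$. For $a\in[1/2,1]$ and $x\le 3/4$ this holds because $5a-1-4ax\ge 5a-1-3a=2a-1\ge 0$. Therefore $h\le 0$ on $[0,3/4]$, giving $\frac{d}{dx}\Beta(x;a,a)\le\phi'(x)$ there; integrating from $0$ to $x$ and then specializing $x=\frac{3r}{2(1+r)}$ yields the claim.

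The argument is elementary calculus and presents no deep obstacle, but it is worth flagging the naive approach that fails: bounding $(1-t)^{a-1}$ on the range of integration by its largest value and comparing $\Beta(x;a,a)$ against $\frac{x^a}{a}$. That crude majorant is too lossy — it already breaks at $a=\tfrac12$ — so the point is to compare directly against the sharper majorant $\phi$ (equivalently, to absorb the $(1-t)^{a-1}$ growth into the derivative comparison rather than estimating it separately). The only things to watch are the algebra of $\phi'$, the sign bookkeeping in the reduction to $5a-1-4ax\ge0$, and the mild integrability remark at $x=0$ that licenses the ``integrate the pointwise derivative inequality'' step.
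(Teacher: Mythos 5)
Your argument is correct. Every step checks out: with $x=\frac{3r}{2(1+r)}$ one indeed has $3-2x=\frac{3}{1+r}$ and hence $\phi(x)=\frac1a\bigl(\frac32 r\bigr)^a$; the derivative computation $\phi'(x)=3^{a+1}x^{a-1}(3-2x)^{-(a+1)}$ is right; and the reduction of $h'(x)\le 0$ to $5a-1-4ax\ge 0$, which holds on $[0,3/4]$ precisely because $2a-1\ge0$, is a clean way to close the comparison. The integrability remark at $x=0$ is the right thing to flag and is harmless since $a>0$.

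The route differs from the paper's in presentation, though the two are closely related. The paper substitutes $t=\frac{3r}{2}\cdot\frac{u}{1+ru}$ (which is exactly the reparametrization under which your majorant satisfies $\phi(t(u))=\frac1a\bigl(\frac{3ru}{2}\bigr)^a$), rewrites $\Beta(x;a,a)$ as $\bigl(\frac{3r}{2}\bigr)^a\int_0^1 u^{a-1}\bigl(1-\frac{ru}{2}\bigr)^{a-1}(1+ru)^{-2a}\,\dif u$, and bounds the bracket pointwise by $u^{a-1}$ using the elementary inequality $\bigl(1-\frac{t}{2}\bigr)(1+t)\ge1$ together with $1-3a\le0$. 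Your proof instead establishes the stronger standalone statement $\Beta(x;a,a)\le\frac1a\bigl(\frac{3x}{3-2x}\bigr)^a$ for all $x\in[0,3/4]$ by a derivative comparison, with the key pointwise inequality verified via the sign of $5a-1-4ax$ rather than via the paper's product inequality. What your version buys is that it avoids the change of variables entirely and makes the majorant explicit as a function of $x$; what the paper's version buys is that the constraint $a\in[1/2,1]$ enters through the single condition $1-3a\le0$. Either is a valid proof of the lemma.
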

\begin{proof}
Substituting $t=\frac{3}{2} \frac{u}{1+u}$, we obtain
\begin{equation*}
    \Beta\left(\frac{3}{2}\,\frac{r}{1+r};a,a\right) = \int_0^{\frac{3}{2}\frac{r}{1+r}} t^{a-1}(1-t)^{a-1}\dif t = \left(\frac{3}{2}\right)^a \int_0^r u^{a-1} \left(1+u\right)^{-2a}\left(1-\frac{u}{2}\right)^{a-1} \dif u.
\end{equation*}
Since $u\in [0,1]$, we have
\begin{equation*}
    u-u^2 \geq 0 \iff \left(2-u\right)\left(1+u\right)\geq 2 \iff \left(1-\frac{u}{2}\right)\geq \left(1+u\right)^{-1}.
\end{equation*}
Further note that $a-1\leq 0$ and $1-3a\leq 0$, we have
\begin{equation*}
    \left(1+u\right)^{-2a}\left(1-\frac{u}{2}\right)^{a-1} \leq \left(1+u\right)^{-2a}\left(1+u\right)^{1-a} = \left(1+u\right)^{1-3a} \leq 1.
\end{equation*}
Hence we obtain
\begin{equation*}
    \Beta\left(\frac{3}{2}\,\frac{r}{1+r};a,a\right) \leq \left(\frac{3}{2}\right)^a \int_0^r u^{a-1} \dif u = \frac{1}{a}\left(\frac32\, r\right)^{a}
\end{equation*}
as desired.
\end{proof}


\begin{lemma}\label{lem:phi_max_location}
Let $a\in[1/2,1]$. Then
\begin{equation*}
    \argmax_{x\geq 0} \frac{1-e^{-x}}{x^{a}} \leq \frac{2(1-a)}{a}.
\end{equation*}
\end{lemma}
\begin{proof}
Write $\varphi_a(x):=\frac{1-e^{-x}}{x^{a}}$. Then for $x>0$,
\begin{equation*}
    \varphi_a^{\prime}(x) = x^{-a-1} e^{-x} \left( x- a\left(e^x-1\right)\right).
\end{equation*}
\begin{itemize}
    \item When $a=1$, for all $x>0$, note that $e^x-1\geq x$, we have
    \begin{equation*}
        \varphi_1^{\prime}(x) = x^{-a-1} e^{-x} \left( x- \left(e^x-1\right)\right)  \leq 0.
    \end{equation*}
    Hence $\varphi_1(x)$ is decreasing on $(0,\infty)$, and equivalently, maximized at $x=0 = \frac{2(1-a)}{a}$.
    \item When $a\in [1/2,1)$, $\varphi_a(x)$ is maximized by its unique critical point $x_*>0$, which satisfies
    \begin{equation*}
        \varphi_a^{\prime}(x_*)=0 \iff x_* = a\left(e^{x_*}-1\right).
    \end{equation*}
    Further using $e^{x}-1 \geq x+\frac{1}{2}x^2$ for $x>0$, we have
    \begin{equation*}
        x_* = a\left(e^{x_*}-1\right) \geq a\left(x_*+\frac{1}{2}x_*^2\right) \iff 0\leq x_* \leq \frac{2(1-a)}{a}.
    \end{equation*}
\end{itemize}
Hence, in both cases, the maximizer of $\varphi_a$ in $[0,\infty)$ is bounded by $\frac{2(1-a)}{a}$ as desired.
\end{proof}

\end{document}